\newtheorem{theorem}{Theorem}
\newtheorem{example}[theorem]{Example}
\newtheorem{lemma}[theorem]{Lemma}
\newtheorem{proposition}[theorem]{Proposition}
\newtheorem{remark}[theorem]{Remark}
\newenvironment{proof}[1][Proof]{\noindent\textbf{#1.} }{\ \rule{0.5em}{0.5em}}
\renewcommand{\nolimits}{}
\renewcommand{\limits}{}
\def\tto{\;{\lower 1pt \hbox{$\rightarrow$}}\kern -10pt
\hbox{\raise 2pt \hbox{$\rightarrow$}}\;}
\def\N{I\!\!N}
\def \N{I\!\!N}
\newcounter{lk}
\begin{document}

\begin{center}
\textbf{QUANTITATIVE STABILITY OF LINEAR INFINITE INEQUALITY
SYSTEMS UNDER BLOCK PERTURBATIONS WITH APPLICATIONS TO CONVEX
SYSTEMS}\footnote{This research was partially supported by grants
MTM2008-06695-C03 (01-02)
from MICINN (Spain).}\\[3ex]
M. J. C\'{A}NOVAS\footnote{Center of Operations Research, Miguel
Hern\'{a}ndez University of Elche, 03202 Elche (Alicante), Spain
(canovas@umh.es, parra@umh.es).}, M. A.
L\'{O}PEZ\footnote{Department of Statistics and Operations
Research, University of Alicante, 03080 Alicante, Spain
(marco.antonio@ua.es).}, B. S. MORDUKHOVICH\footnote{Department of
Mathematics, Wayne State University, Detroit, MI 48202, USA
(boris@math.wayne.edu). The research of this author was partially
supported the US National Science Foundation under grants
DMS-0603848 and DMS-1007132.} and \linebreak J.
PARRA\footnotemark[2]
\end{center}

{\small \textbf{Abstract.} The original motivation for this paper
was to provide an efficient quantitative analysis of convex
infinite (or semi-infinite) inequality systems whose decision
variables run over general infinite-dimensional (resp.\
finite-dimensional) Banach spaces and that are indexed by an
arbitrary fixed set $J$. Parameter perturbations on the right-hand
side of the inequalities are required to be merely bounded, and
thus the natural parameter space is $l_{\infty }(J)$. Our basic
strategy consists of linearizing the parameterized convex system
via splitting convex inequalities into linear ones by using the
Fenchel-Legendre conjugate. This approach yields that arbitrary
bounded right-hand side perturbations of the convex system turn on
constant-by-blocks perturbations in the linearized system. Based
on advanced variational analysis, we derive a precise formula for
computing the exact Lipschitzian bound of the feasible solution
map of block-perturbed linear systems, which involves only the
system's data, and then show that this exact bound agrees with the
coderivative norm of the aforementioned mapping. In this way we
extend to the convex setting the results of \cite{CLMP09}
developed for arbitrary perturbations with no block structure in
the linear framework under the boundedness assumption on the
system's coefficients. The latter boundedness assumption is
removed in this paper when the decision space is reflexive. The
last section provides the aimed application to the convex
case.}\vspace*{ 0.05in}

{\small \textbf{Key words.} semi-infinite and infinite programming,
parametric optimization, variational analysis, convex infinite inequality
systems, quantitative stability, Lipschitzian bounds, generalized
differentiation, coderivatives, block perturbations \vspace*{0.05in} }%
\vspace*{0.05in}

{\small \textbf{AMS subject classification.} 90C34, 90C25, 49J52, 49J53,
65F22 }\bigskip

\section{Introduction}

This paper arose motivated by the extension to convex inequality
systems of some results from \cite{CLMP09} concerning
quantitative/Lipschitz stability of feasible solutions to linear
infinite and semi-infinite systems. The basic idea was to use the
so-called \emph{standard linearization }by means of the \emph{
Fenchel-Legendre conjugate. }This linearization approach entails
that each convex inequality is split into a generally infinite
system of linear inequalities; so that a right-hand side
perturbation of each convex inequality yields the same
perturbation for all the linear inequalities coming from splitting
the convex one. In this way, we are dealing with a linear
inequality system subject to {\em block perturbations}. Based on
this initial motivation we firstly analyze in a general framework
the Lipschitz stability of linear systems under \emph{arbitrary}
block perturbations.

Indeed, the methodology of block perturbations for linear systems
and their applications to convex inequalities has been previously
developed in \cite{CLPT10} to compute the distance to
ill-posedness for such systems, although now the parameter spaces
associated with block partitions are different from those in
\cite{CLPT10}. Going a bit further back, extreme cases of constant
perturbations are implicitly present along some proofs in
\cite{CDLP05,CLPT05}. This observation on the prominent role of
constant perturbations is also pointed out in the very recent
preprint \cite{Ioffe2010}  that provides an alternative
methodology to approach directly convex systems, where the concept
of perfect regularity plays a central role.

The expression obtained in the present paper for the \emph{exact
Lipschitzian bound }(also called Lipschitz modulus; see the
definition below) of the feasible set mapping provides a natural
extension of its linear counterpart \cite[Theorem~4.6]{CLMP09};
cf.\ also \cite[Corollary 3.2]{CDLP05} and \cite[Theorem
1]{CGP08}). In this sense, the methodology and proofs themselves
can be treated as major contributions of this paper. Specifically
we emphasize, aside from the methodology, the usage of tools such
as \emph{coderivatives} and the \emph{extended Ascoli formula} of
Lemma~\ref{Lem_distance}.

Consider the linear inequality system {\normalsize
\begin{equation}
\big\{\left\langle a_{t}^{\ast },x\right\rangle \leq b_{t},\ t\in
T\big\} \label{nominal sys}
\end{equation}
}referred to as the{\normalsize \ }\emph{nominal system}, where
$T$ is an arbitrary \emph{index set}, $x\in X$ is a \emph{decision
variable} from a general Banach space $X$ with its topological
dual $X^{\ast }$, and where the function $T\ni t\mapsto \left(
a_{t}^{\ast},b_{t}\right)\in X^{\ast }\times \mathbb{R}$ providing
the nominal system's data is also arbitrary. When $T$ is infinite
and $X$ is finite-dimensional, we are dealing with
\emph{semi-infinite }systems whereas \emph{infinite }systems allow
for both infinitely many inequalities and infinite-dimensional
decision spaces. Our approach involves considering a partition of
the index set $T$ denoted by
\begin{equation*}
\mathcal{J}:=\left\{ T_{j}\mid j\in J\right\},
\end{equation*}
i.e., $T_{j}\ne\varnothing $ for all $j\in J$ and
\begin{equation*}
T=\bigcup_{j\in J}T_{j}\text{ with }T_{i}\cap T_{j}=\varnothing\;
\text{ if }\; i\neq j.
\end{equation*}%
In the sequel the sets $T_{j},$ $j\in J,$ in the partition are
referred to as \emph{blocks}. Then we consider the parameterized
system
\begin{equation}
\sigma _{\mathcal{J}}\left( p\right) :=\big\{\left\langle
a_{t}^{\ast },x\right\rangle\le b_{t}+p_{j},\ t\in T_{j},~j\in
J\big\}, \label{linear sys j}
\end{equation}
where the \emph{perturbation parameter }$p=\left( p_{j}\right) _{j\in J}$
ranges on the Banach space $l_{\infty }(J)$ endowed with the norm
\begin{equation*}
\left\Vert p\right\Vert :=\sup_{j\in J}\left\vert p_{j}\right\vert .
\end{equation*}
The zero function $\overline{p}=0$ is regarded as the
\emph{nominal parameter}, which corresponds to the nominal system
$($\ref{nominal sys}$),$\ which coincides with $\sigma
_{\mathcal{J}}\left( 0\right) \ $for every partition
$\mathcal{J}$. From now on, in order to simplify the notation, the
nominal system $($\ref{nominal sys}$)$ is denoted just by $\sigma
\left(0\right) $. The two extreme partitions are
\begin{equation}
\mathcal{J}_{\min }:=\left\{ T\right\} \text{ and
}\mathcal{J}_{\max }:=\big\{\left\{t\right\}\big|\;\;t\in
T\big\}\label{J min max}
\end{equation}
called hereafter the \emph{minimum partition} and the
\emph{maximum partition}, respectively$.$\vspace*{0.05in}

The major goal of the paper is to analyze \emph{quantitative
stability} of the feasible set of the {\em linear} infinite
inequality system (\ref{nominal sys}) under small {\em block
perturbations} of the right-hand side. In more detail, we focus on
characterizing \emph{Lipschitzian behavior} of the feasible
solution map with computing the \emph{exact bound} of Lipschitzian
moduli by using appropriate tools of advanced variational analysis
and generalized differentiation particularly based on
coderivatives. The results obtained for (\ref{nominal sys}) are
then applied to infinite {\em convex} inequalities by means of
their Fenchel-Legendre conjugate linearization.

If no confusion arises, we use the same notation $\Vert \cdot \Vert $ for
the given norm in $X$ and for the corresponding dual norm in $X^{\ast }$
defined by
\begin{equation*}
\left\Vert x^{\ast }\right\Vert :=\sup\limits_{\left\Vert x\right\Vert \leq
1}\left\langle x^{\ast },x\right\rangle \;\text{ for any }\;x^{\ast }\in
X^{\ast },
\end{equation*}
where $\left\langle x^{\ast},x\right\rangle$ stands for the
standard canonical pairing. Our main attention is focused on the
\emph{feasible solution map} $\mathcal{F}_{\mathcal{J}}:l_{\infty
}(J)\rightrightarrows X$ defined by
\begin{equation}
\mathcal{F}_{\mathcal{J}}\left( p\right) :=\big\{x\in X\big|\;x\text{ is a
solution to }\sigma _{\mathcal{J}}(p)\big\}.  \label{feasible j}
\end{equation}

The rest of the paper is organized as follows: Section~2 presents
some basic definitions and key results from variational analysis
and generalized differentiation needed in the sequel. In Section~3
we establish verifiable characterizations of the Lipschitz-like
property of the block-perturbed feasible solution map
(\ref{feasible j}) with precise computing the exact Lipschitzian
bound in terms of the initial data of (\ref{nominal sys}). For
this computation we assume either that $\{a_{t}^{\ast },~t\in T\}$
is bounded in $X^{\ast }$, as in \cite{CLMP09}, or that the Banach
space $X$ of decision variables is reflexive. Section~4 presents
an application of the results obtained for linear systems with
block perturbations to quantitative stability analysis of feasible
solutions to convex inequality systems through their conjugate
linearization. \vspace*{0.05in}

Our notation is basically standard in the areas of variational analysis and
semi-infinite/infinite programming; see, e.g., \cite{GoLo98,mor06a}. Unless
otherwise stated, all the spaces under consideration are Banach. The symbol $%
w^{\ast }$ signifies the weak$^{\ast }$ topology of a dual space, and thus
the weak$^{\ast }$ topological limit corresponds to the weak$^{\ast }$
convergence of nets. Some particular notation will be recalled, if
necessary, in the places where it is introduced.

\section{Preliminaries and First Stability Results}

Given a set-valued mapping $F\colon Z\rightrightarrows Y$ between
Banach spaces $Z$ and $Y$, we say the $F$ is \emph{Lipschitz-like
around} $(\bar{z}, \bar{y})\in \mbox{\rm gph}\,F$, the
\emph{graph} of $F$, with \emph{modulus } $\ell\ge 0$ if there are
neighborhoods $U$ of $\bar{z}$ and $V$ of $\bar{ y}$ such that
\begin{equation}
F(z)\cap V\subset F(u)+\ell \Vert z-u\Vert \mathbb{B}_{Y}\;\text{
for any } \;z,u\in U, \label{eq_Lips}
\end{equation}
where $\mathbb{B}_{Y}$ stands for the closed unit ball in $Y$. The
infimum of moduli $\{\ell \}$ over all the combinations of $\{\ell
,U,V\}$ satisfying (\ref{eq_Lips}) is called the \emph{exact
Lipschitzian bound} of $ F$ around $(\bar{z},\bar{y})$ and is
labeled as $\mbox{\rm lip}\,F(\bar{z}, \bar{y})$.

If $V=Y$ in (\ref{eq_Lips}), this relationship signifies the classical
(Hausdorff) \emph{local Lipschitzian} property of $F$ around $\bar{z}$ with
the \emph{exact Lipschitzian bound} denoted by $\mbox{\rm lip}\, F(\bar{z})$
in this case.

It is worth mentioning that the Lipschitz-like property (also known as the
Aubin or pseudo-Lipschitz property) of an arbitrary mapping $F\colon
Z\rightrightarrows Y$ between Banach spaces is equivalent to other two
fundamental properties in nonlinear analysis while defined for the inverse
mapping $F^{-1}\colon Y\rightrightarrows Z$; namely, to the \emph{metric
regularity} of $F^{-1}$ and to the \emph{linear openness} of $F^{-1}$ around
$(\bar{y},\bar{z})$, with the corresponding relationships between their
exact bounds (see, e.g. \cite{iof,mor06a,rw}). From these relationships we
can easily observe the following representation for the exact Lipschitzian
bound:
\begin{equation}
\mbox{\rm lip}\,F(\bar{z},\bar{y})=\limsup_{(z,y)\rightarrow (\bar{z},\bar{y}%
)}\frac{\mbox{dist}\big(y;F(z)\big)}{\mbox{dist}\big(z;F^{-1}(y)\big)},
\label{eq_quotient}
\end{equation}%
where $\inf \emptyset :=\infty $ (and hence $\mbox{dist}(x;\emptyset
)=\infty $) as usual, and where $0/0:=0$. We have accordingly that $%
\mbox{\rm lip}\,F(\bar{z},\bar{y})=\infty $ if $F$ is not Lipschitz-like
around $(\bar{z},\bar{y})$.

A remarkable fact consists of the possibility to characterize pointwisely
the (derivative-free) Lipschitz-like property of $F$ around $(\bar{z},\bar{y}%
)$---and hence its local Lipschitzian, metric regularity, and linear
openness counterparts---in terms of a dual-space construction of generalized
differentiation called the \emph{coderivative} of $F$ at $(\bar{z},\bar{y}%
)\in \mbox{\rm gph}\,F$. The latter is a positively homogeneous
multifunction $D^{\ast }F(\bar{z},\bar{y})\colon Y^{\ast }\rightrightarrows
Z^{\ast }$ defined by
\begin{equation}
D^{\ast }F(\bar{z},\bar{y})(y^{\ast }):=\big\{z^{\ast }\in Z^{\ast }\big|%
\;(z^{\ast },-y^{\ast })\in N\big((\bar{z},\bar{y});\mbox{\rm gph}\,F\big)%
\big\},\quad y^{\ast }\in Y^{\ast },  \label{cod}
\end{equation}%
where $N(\cdot ;\Omega )$ stands for the collection of generalized
normals to a set at a given point known as the \emph{basic}, or
\emph{limiting}, or \emph{Mordukhovich normal cone}; see, e.g.
\cite{mor76,mor06a,rw,s} and references therein. When both $Z$ and
$Y$ are finite-dimensional, it is proved in \cite{mor93} (cf.\
also \cite[Theorem~9.40]{rw}) that a closed-graph mapping $F\colon
Z\rightrightarrows Y$ is Lipschitz-like around
$(\bar{z},\bar{y})\in\mbox{\rm gph}\,F$ if and only if
\begin{equation}
D^{\ast }F(\bar{z},\bar{y})(0)=\{0\},  \label{cod-cr}
\end{equation}
and the exact Lipschitzian bound of moduli $\{\ell \}$ in (\ref{eq_Lips}) is
computed by
\begin{equation}
\mbox{\rm lip}\,F(\bar{z},\bar{y})=\Vert D^{\ast
}F(\bar{z},\bar{y})\Vert :=\sup \big\{\Vert z^{\ast }\Vert
\;\big|\;z^{\ast }\in D^{\ast }F(\bar{z}, \bar{y})(y^{\ast
}),\;\Vert y^{\ast }\Vert \leq 1\big\}. \label{eq_norm coderiv}
\end{equation}
There is an extension \cite[Theorem~4.10]{mor06a} of the
coderivative criterion \eqref{cod-cr}, via the so-called mixed
coderivative of $F$ at $( \bar{z},\bar{y})$, to the case when both
spaces $Z$ and $Y$ are Asplund (i.e., their separable subspaces
have separable duals) under some additional \textquotedblleft
partial normal compactness" assumption that is automatic in finite
dimensions. Also the aforementioned theorem contains an extension
of the exact bound formula \eqref{eq_norm coderiv} provided that
$Y$ is Asplund while $Z$ is finite-dimensional. Unfortunately,
none of these results is applied in our setting (\ref{feasible j})
when $J$ is infinite; the latter is our standing assumption
needed, in particular, for applications to convex infinite systems
developed in Section~4.

Nevertheless we show in this paper that both \eqref{cod-cr} and
\eqref{eq_norm coderiv} remain valid for
$\mathcal{F}_{\mathcal{J}}\colon l_{\infty }(J)\rightrightarrows
X$ in (\ref{feasible j}) defined by the {\em block-perturbed}
infinite system of linear inequalities (\ref{linear sys j}). The
graph $\mbox{\rm gph}\,\mathcal{F}_{\mathcal{J}}$ of this mapping
is obviously convex, and we can easily verify that it is also
closed with respect to the product topology. If the partition
index set $J$ is infinite, $l_{\infty }(J)$ is an
infinite-dimensional Banach space, which is \emph{never Asplund}.
It is well known from functional analysis (see, e.g., \cite{DS})
that there exists an isometric isomorphism between the topological
dual $l_{\infty }(J)^{\ast }$ and the space $ba(J)$ of additive
and bounded measures on $2^{J}$.

Given a subset $S$ of a normed space, the notation $\mbox{\rm co}\,S$ and $%
\mbox{\rm cone}\,S$ stand for the convex hull and the conic convex hull of $%
S $, respectively. The symbol $\mathbb{R}_{+}$ signifies the interval $\left[
0,\infty \right) $, and by $\mathbb{R}_{+}^{\left( J\right) }$ we denote the
collection of all the functions $\lambda =\left( \lambda _{j}\right) _{j\in
J}\in \mathbb{R}_{+}^{J}$ such that $\lambda _{j}>0$ for only \emph{finitely
many} $j\in J$. As usual, $\mathrm{cl}^{\ast }S$ stands for the weak$^{\ast
} $ ($w^{\ast }$ in brief) topological closure of $S$.

Following the lines in \cite[Theorem~3.2]{CLMP09} and appealing to
the extended Farkas Lemma (see \cite[Lemma~2.1]{CLMP09} and
references therein), we have the following characterization of
$D^{\ast }\mathcal{F}_{\mathcal{J} }\left(0,\overline{x}\right)$,
where we use the notation $\delta _{j}$ for the classical
\emph{Dirac measure} at $j\in J$ given by
\begin{equation*}
\left\langle\delta _{j},p\right\rangle:=p_{j}\text{ for
}p=\left(p_{j}\right)_{j\in J}\in l_{\infty}\left( J\right).
\end{equation*}

\begin{proposition} {\bf (computing coderivatives for linear systems).}
{\normalsize \label{pr coderivJ} Consider any $\overline{x}\in
\mathcal{F}_{\mathcal{J }}\left( 0\right)$ for the mapping
}$\mathcal{F}_{\mathcal{J}}\colon l_{\infty}(J)\rightrightarrows
X$ {\normalsize defined by }\emph{(\ref{feasible j}).}
{\normalsize Then we have }$p^{\ast }\in D^{\ast
}\mathcal{F}_{\mathcal{J}}\left( 0,\overline{x}\right) \left(
x^{\ast }\right) $ if and only if
\begin{equation*}
\left( p^{\ast },-x^{\ast },-\left\langle x^{\ast },\overline{x}
\right\rangle\right)\in\mathrm{cl}^{\ast}\mathrm{cone}\big\{
\left(-\delta_{j},a_{t}^{\ast},b_{t}\right)\big|\; j\in J,~t\in
T_{j}\big\} .
\end{equation*}
\end{proposition}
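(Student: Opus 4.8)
The plan is to compute the basic normal cone to $\gph\mathcal{F}_{\mathcal{J}}$ at $(0,\ox)$ directly and then read off the coderivative from its definition \eqref{cod}. Since $\gph\mathcal{F}_{\mathcal{J}}$ is convex (as noted in the text) and closed in the product topology, the basic normal cone coincides with the normal cone of convex analysis, namely
\begin{equation*}
N\big((0,\ox);\gph\mathcal{F}_{\mathcal{J}}\big)=\big\{(p^{\ast},x^{\ast})\in l_\infty(J)^{\ast}\times X^{\ast}\;\big|\;\la p^{\ast},p\ra+\la x^{\ast},x-\ox\ra\le 0\ \text{ for all }(p,x)\in\gph\mathcal{F}_{\mathcal{J}}\big\}.
\end{equation*}
First I would describe $\gph\mathcal{F}_{\mathcal{J}}$ explicitly: $(p,x)$ lies in it iff $\la a_t^{\ast},x\ra-b_t-p_j\le 0$ for all $j\in J$ and $t\in T_j$, i.e. iff $(p,x,-1)$ (or rather the triple $(p,x,\la x^{\ast},\ox\ra)$-type vector) satisfies a homogeneous system of linear inequalities indexed by $\bigcup_{j\in J}(\{j\}\times T_j)$ with data vectors $(-\delta_j,a_t^{\ast},b_t)$. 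This is the standard device of passing to the homogenized cone in $l_\infty(J)^{\ast}\times X^{\ast}\times\R$.

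The core step is then an application of the extended Farkas Lemma cited from \cite[Lemma~2.1]{CLMP09}: a linear functional is nonpositive on the solution set of a (possibly infinite) homogeneous linear inequality system precisely when the corresponding coefficient vector belongs to the $w^{\ast}$-closed convex cone generated by the data vectors together with the extra coordinate accounting for the constant terms. Concretely, $\la p^{\ast},p\ra+\la x^{\ast},x-\ox\ra\le 0$ holds for all feasible $(p,x)$ iff
\begin{equation*}
(p^{\ast},-x^{\ast},-\la x^{\ast},\ox\ra)\in\mathrm{cl}^{\ast}\mathrm{cone}\big\{(-\delta_j,a_t^{\ast},b_t)\mid j\in J,\ t\in T_j\big\}.
\end{equation*}
Here one must be careful with signs: the coderivative \eqref{cod} pairs $p^{\ast}$ with $+p$ and $x^{\ast}$ with $-$(the $X$-variable), and the inhomogeneous term $b_t$ is absorbed by observing that $(p,x)\in\gph\mathcal{F}_{\mathcal{J}}$ is equivalent to a homogeneous condition on $(p,x,1)$, which upon taking the supporting functional forces the third coordinate $-\la x^{\ast},\ox\ra$. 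Substituting $p^{\ast}\in D^{\ast}\mathcal{F}_{\mathcal{J}}(0,\ox)(x^{\ast})\iff(p^{\ast},-x^{\ast})\in N((0,\ox);\gph\mathcal{F}_{\mathcal{J}})$ then yields exactly the claimed membership.

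The main obstacle I anticipate is purely bookkeeping rather than conceptual: correctly tracking the sign conventions and the role of the third ($\R$-valued) coordinate when homogenizing, and verifying that the hypotheses of the extended Farkas Lemma apply in the present setting where the parameter dual space is $ba(J)=l_\infty(J)^{\ast}$ rather than a space of finitely supported sequences — in particular that the $w^{\ast}$-closure is taken in the correct product topology on $l_\infty(J)^{\ast}\times X^{\ast}\times\R$. Since the excerpt explicitly says this follows "along the lines of \cite[Theorem~3.2]{CLMP09}," I expect the proof to be a short reduction to that Farkas-type lemma once the graph is written in homogenized form, so I would keep the argument brief and refer to \cite{CLMP09} for the routine verification.
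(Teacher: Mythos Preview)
Your proposal is correct and follows exactly the approach the paper indicates: the paper does not give a self-contained proof but simply states that the result follows ``along the lines of \cite[Theorem~3.2]{CLMP09}'' via the extended Farkas Lemma \cite[Lemma~2.1]{CLMP09}, which is precisely the reduction you outline (convex normal cone to the graph, homogenization, Farkas). Your attention to the sign bookkeeping and the third $\R$-coordinate is appropriate, and your plan to keep the argument brief with a reference to \cite{CLMP09} matches the paper's own treatment.
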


Let us now define the characteristic set {\normalsize
\begin{equation}
C_{\mathcal{J}}\left( p\right) :=\mbox{\rm co}\,\left\{ \left(
a_{t}^{\ast },b_{t}+p_{j}\right) ,~t\in T_{j},~j\in J\right\}
\subset X^{\ast }\times \mathbb{R} \label{C linear j}
\end{equation}
for $p\in l_{\infty }(J)$. Observe that }$C_{\mathcal{J}}\left(
0\right) $ actually does not depend on $ \mathcal{J}$ but just on
the nominal system (\ref{nominal sys}). For this reason, we denote
in what follows the $C_{\mathcal{J}}\left( 0\right)$ simply by
$C\left(0\right),$ i.e., {\normalsize
\begin{equation*}
C\left( 0\right):=\mbox{\rm co}\,\big\{\left( a_{t}^{\ast
},b_{t}\right),~t\in T\big\}.
\end{equation*}}

{\normalsize We say that the system }$\sigma \left( 0\right) $ in (\ref%
{nominal sys}){\normalsize \ satisfies the \emph{strong Slater
condition} (SSC) if there exists a point $\widehat{x}\in X$ such
that
\begin{equation*}
\sup_{t\in T}\left[ \left\langle a_{t}^{\ast
},\widehat{x}\right\rangle -b_{t}\right]<0.
\end{equation*}
In this case $\widehat{x}$ is called a \emph{strong Slater point}
(SS point in brief) for }$\sigma\left( 0\right)${\normalsize $.$}

\begin{lemma}
{\normalsize \label{Lemma1} {\bf (equivalent descriptions of the
Lipschitz-like property).} Assume that
$\overline{x}\in\mathcal{F}_{\mathcal{J}}\left( 0\right)$. The
following statements are equivalent:}

\emph{(i)}{\normalsize \ $\mathcal{F}$}$_{\mathcal{J}}${\normalsize \ is
Lipschitz-like around $(0,\overline{x});$}

\emph{(ii)}{\normalsize \ }$D^{\ast }\mathcal{F}_{\mathcal{J}}(0,\bar{x}%
)(0)=\{0\};$

\emph{(iii)} ${\normalsize \sigma }\left(0\right) ${\normalsize \
satisfies the SSC}$;${\normalsize \ }

\emph{(iv)}{\normalsize \ $0\in \mathrm{int}(\mbox{\rm dom}\,\mathcal{F_{%
\mathcal{J}})}$}$;${\normalsize \ }

\emph{(v)}{\normalsize \ $\mathcal{F}$}$_{\mathcal{J}}${\normalsize \ is
Lipschitz-like around $(0,x)$ for all $x\in \mathcal{F}_{\mathcal{J}}\left(
0\right) ;$ }

$\emph{(vi)}${\normalsize \ $(0,0)\notin \mbox{\rm cl}\,^{\ast }C\left(
0\right) .$}
\end{lemma}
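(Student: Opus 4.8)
The plan is to establish the cycle of equivalences by combining a few known facts with the coderivative formula of Proposition~\ref{pr coderivJ}. First I would prove (i)$\Leftrightarrow$(ii): this is the coderivative criterion \eqref{cod-cr}, and although the ambient space $l_\infty(J)$ is not Asplund when $J$ is infinite, the paper's standing claim is that \eqref{cod-cr} remains valid for $\mathcal{F}_{\mathcal{J}}$ because $\gph\mathcal{F}_{\mathcal{J}}$ is convex and closed; I would invoke the convex-analysis version of the coderivative criterion (where for convex-graph multifunctions the Lipschitz-like property around an interior-of-domain point is equivalent to the triviality of the ``horizon'' coderivative), so this direction reduces to citing that convexity-based result together with Proposition~\ref{pr coderivJ}.

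Next I would handle the block of equivalences (i)$\Leftrightarrow$(iv)$\Leftrightarrow$(v). For a multifunction with convex closed graph, being Lipschitz-like around a point $(0,\ox)$ of the graph is equivalent to $0\in\mathrm{int}(\dom\mathcal{F}_{\mathcal{J}})$ by the Robinson--Ursescu theorem, and this condition does not involve $\ox$ at all; hence (i)$\Leftrightarrow$(iv), and since (iv) is independent of the particular feasible point, (iv)$\Rightarrow$(v)$\Rightarrow$(i) is immediate. So the heart of the lemma is to connect the ``geometric'' conditions (iii) and (vi) to (iv). For (iii)$\Leftrightarrow$(iv): if $\widehat x$ is a strong Slater point with $\sup_{t\in T}[\la a_t^*,\widehat x\ra-b_t]=-\rho<0$, then for any $p$ with $\|p\|<\rho$ the point $\widehat x$ still solves $\sigma_{\mathcal{J}}(p)$, so $\widehat x\in\mathcal{F}_{\mathcal{J}}(p)$ and thus the ball of radius $\rho$ around $0$ lies in $\dom\mathcal{F}_{\mathcal{J}}$; conversely, if $0\in\mathrm{int}(\dom\mathcal{F}_{\mathcal{J}})$ then for the constant perturbation $p=-\varepsilon\,(1)_{j\in J}$ with $\varepsilon>0$ small there is a feasible $x$, i.e.\ $\la a_t^*,x\ra\le b_t-\varepsilon$ for all $t$, which is exactly the SSC. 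I expect this to be routine once the constant-perturbation trick is identified as the right test direction.

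The remaining and, in my view, main technical point is (iii)$\Leftrightarrow$(vi), i.e.\ the equivalence between the SSC and $(0,0)\notin\cl^*C(0)$, where $C(0)=\co\{(a_t^*,b_t):t\in T\}$ and the closure is taken in the $w^*\times|\cdot|$ topology of $X^*\times\R$. The ``$\Rightarrow$'' direction is a separation argument: a strong Slater point $\widehat x$ with margin $\rho$ gives, for every $(a^*,\beta)\in C(0)$ written as a finite convex combination, the inequality $\la a^*,\widehat x\ra-\beta\le-\rho$, hence $\la(\widehat x,-1),(a^*,\beta)\ra\le-\rho<0$ for all points of $C(0)$; since $(\widehat x,-1)$ is $w^*\times|\cdot|$-continuous as a functional on $X^*\times\R$, this inequality passes to the closure and shows $(0,0)$, which would give value $0$, cannot lie in $\cl^*C(0)$. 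For ``$\Leftarrow$'', if $(0,0)\notin\cl^*C(0)$ then, since $\cl^*C(0)$ is a $w^*$-closed (in the relevant product topology) convex set, the separation theorem yields a nonzero continuous functional on $X^*\times\R$ strictly separating $(0,0)$ from $\cl^*C(0)$; such a functional has the form $(x,-s)$ with $x\in X$ (by reflexivity of the predual structure of the $w^*$-topology) and $s\in\R$, giving $\la a_t^*,x\ra-s\,b_t\le-\eta<0$ for all $t$; a short argument (normalizing and checking that $s>0$, e.g.\ by testing against a direction in which $b_t$ can be made large, or using that the recession directions $(a_t^*,1)$ issues) lets one divide by $s$ and recover an SS point. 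The subtlety I would flag is making the separation argument precise in the non-Asplund space $X^*$ with its $w^*$-topology---one must be careful that the separating functional is genuinely of the ``evaluation'' form $(x,\cdot)$ with $x\in X$ rather than an element of $X^{**}$, which is exactly where the $w^*$-topology (as opposed to the norm topology) on the first factor is used, together with the fact that closed convex sets in a locally convex space are separated from exterior points by continuous linear functionals, whose restriction to the $w^*$-topology are precisely the evaluations at points of $X$.

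I would organize the write-up as: (iii)$\Rightarrow$(iv)$\Rightarrow$(v)$\Rightarrow$(i)$\Rightarrow$(ii) via the coderivative criterion and Robinson--Ursescu, then (ii)$\Rightarrow$(vi) via Proposition~\ref{pr coderivJ} (reading off that $D^*\mathcal{F}_{\mathcal{J}}(0,\ox)(0)=\{0\}$ forces $(0,0)\notin\cl^*\cone\{(-\delta_j,a_t^*,b_t)\}$ in the appropriate slice, which projects to $(0,0)\notin\cl^*C(0)$), and finally (vi)$\Rightarrow$(iii) by the separation argument above, closing the loop.
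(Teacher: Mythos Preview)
Your cycle $(iii)\Rightarrow(iv)\Rightarrow(v)\Rightarrow(i)\Rightarrow(ii)\Rightarrow(vi)\Rightarrow(iii)$ is correct and is in fact a bit more self-contained than what the paper does. The paper never proves $(iii)\Leftrightarrow(vi)$ or $(ii)\Rightarrow(i)$ from scratch: it simply cites \cite[Theorem~3.1]{GoLoTo96} (or \cite[Theorem~6.1]{GoLo98}) for the former and points to the proof of \cite[Theorem~4.1]{CLMP09} for the latter, then invokes Robinson--Ursescu for $(i)\Leftrightarrow(iv)\Leftrightarrow(v)$ and handles $(iii)\Leftrightarrow(iv)$ by the same constant-perturbation trick you identified. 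Your route avoids the need for the hard direction $(ii)\Rightarrow(i)$ entirely, replacing it by the step $(ii)\Rightarrow(vi)$ via Proposition~\ref{pr coderivJ}; that step is legitimate (if $(0,0)\in\cl^{\ast}C(0)$, an Alaoglu argument on the measures $\sum_{j}\big(\sum_{t\in T_j}\lambda_{t\nu}\big)\delta_j$ produces a nonzero $p^{\ast}$ with $(p^{\ast},0,0)$ in the closed cone, hence $D^{\ast}\mathcal{F}_{\mathcal{J}}(0,\ox)(0)\ne\{0\}$), and it is exactly the mechanism the paper exploits later in the proof of Lemma~\ref{Lem norm cod 1 formula}.

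The one place where your sketch is not quite right is the handling of the sign of $s$ in the separation step $(vi)\Rightarrow(iii)$. Your suggested fix (``testing against a direction in which $b_t$ can be made large'' or recession directions $(a_t^{\ast},1)$) does not work in general, since nothing in the hypotheses controls the $b_t$'s. What actually rescues the argument is the standing assumption $\ox\in\mathcal{F}_{\mathcal{J}}(0)$: if the separating functional $(z,c)\in X\times\R$ gives $\la a_t^{\ast},z\ra+c\,b_t\ge\alpha>0$ for all $t$, then for $c>0$ the point $-z/c$ is SS; for $c=0$ one combines $\la a_t^{\ast},z\ra\ge\alpha$ with $\la a_t^{\ast},\ox\ra\le b_t$ to see that $\ox-z$ is SS; and for $c<0$ the feasibility inequality $c\,b_t\le c\la a_t^{\ast},\ox\ra$ reduces to the previous case. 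So the argument closes, but only because the system is consistent; you should make that dependence explicit.
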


\begin{proof}
(i)$\Rightarrow $(ii) is a consequence of \cite[Theorem~1.44]{mor06a}
established for general set-valued mappings of closed graph between Banach
spaces. The proof of (ii)$\Rightarrow $(i) follows the lines in the proof of
\cite[Theorem~4.1]{CLMP09}.

In the case of the maximum partition as in (\ref{J min max}) the
equivalence between (iii) and (vi) may be found in, e.g.,
\cite[Theorem~3.1]{GoLoTo96}; see also \cite[Theorem~6.1]{GoLo98}.
Since (iii) and (vi) are not of parametric nature (i.e., their
definitions involve just the nominal system, independently of the
partition under consideration), the equivalence between them holds
true. Moreover, equivalence (iii)$\Longleftrightarrow $(iv) for
the maximum partition trivially entails that
(iii)$\Longrightarrow$(iv) for the arbitrary partition
$\mathcal{J}$, since block perturbations are a particular case of
arbitrary perturbations. The reverse implication
(iv)$\Longrightarrow $(iii) holds by considering a constant
perturbation $p\equiv\varepsilon $ for $\varepsilon
>0$  sufficient small to
guarantee that $p\in {\normalsize \mathrm{int}}${\normalsize
$(\mbox{\rm dom}\,\mathcal{F_{\mathcal{J}})}$} by taking into
account that constant perturbations (corresponding to the minimum
partition) are trivially a particular case of block perturbations.
The equivalences (i)$\Longleftrightarrow $(iv) and (iv)$
\Longleftrightarrow $(v){\normalsize \ }follows from the classical
Robinson-Ursescu theorem. This completes the proof of the
lemma{\normalsize \vspace*{0.05in}}
\end{proof}

{\normalsize The following technical statement is of its own interest while
playing an essential role in proving the main results presented in the
subsequent sections. We keep the convention $0/0:=0$. Observe that this
result is not of parametric nature (i.e., no concept involving perturbation
of }$p${\normalsize \ is used).}

\begin{lemma} {\bf (distance to feasible solutions).}
{\normalsize \label{Lem_distance}}\emph{\cite[Lemma 4.3]{CLMP09}}
{\normalsize \ Assume that the SSC is satisfied for the system
$\sigma _{\mathcal{J}}\left( p\right) $ in }\emph{(\ref{linear sys
j})}{\normalsize \ for }$p\in l_{\infty }\left( J\right)
${\normalsize. Then for any $x\in X$ we have the representation
\begin{equation}
\mathrm{dist}\big(x;\mathcal{F}_{\mathcal{J}}(p)\big)=\sup_{\left(
x^{\ast},\alpha \right) \in {\mathrm{\small cl}}^{\ast
}C_{\mathcal{J}}\left( p\right) }\frac{\left[ \left\langle x^{\ast
},x\right\rangle -\alpha \right] _{+}}{\left\Vert
x^{\ast}\right\Vert }. \label{eq_distance_formula}
\end{equation}
If furthermore the space $X$ is reflexive, then
\begin{equation}
\mathrm{dist}\big(x;\mathcal{F}_{\mathcal{J}}\left( p\right) \big)%
=\sup_{\left( x^{\ast },\alpha \right) \in C_{\mathcal{J}}\left( p\right) }%
\frac{\left[ \left\langle x^{\ast },x\right\rangle -\alpha \right] _{+}}{%
\left\Vert x^{\ast }\right\Vert }.  \label{distC(p)}
\end{equation}}
\end{lemma}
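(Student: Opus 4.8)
The plan is to establish the equality in \eqref{eq_distance_formula} by proving the two inequalities ``$\ge$'' and ``$\le$'' separately, and then to derive \eqref{distC(p)} from it via reflexivity. Write $\mathcal{F}:=\mathcal{F}_{\mathcal{J}}(p)$ throughout. Under the SSC the set $\mathcal{F}$ is nonempty (a strong Slater point $\widehat{x}$ belongs to it), closed and convex, and, by the classical equivalence between the SSC and the condition $(0_{X^{\ast}},0)\notin\mathrm{cl}^{\ast}C_{\mathcal{J}}(p)$ (cf.\ the proof of Lemma~\ref{Lemma1}), together with consistency, every pair $(0_{X^{\ast}},\beta)\in\mathrm{cl}^{\ast}C_{\mathcal{J}}(p)$ has $\beta>0$ and so contributes $0$ to either supremum under the convention $0/0:=0$. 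The inequality ``$\ge$'' is routine: each $(x^{\ast},\alpha)\in\mathrm{cl}^{\ast}C_{\mathcal{J}}(p)$ yields a valid inequality $\langle x^{\ast},\cdot\rangle\le\alpha$ on $\mathcal{F}$ --- this holds for the generators $(a_{t}^{\ast},b_{t}+p_{j})$, hence for their convex combinations, and then for the weak$^{\ast}$ closure since $x^{\ast}\mapsto\langle x^{\ast},z\rangle$ is weak$^{\ast}$ continuous for each fixed $z$ --- whence for every $z\in\mathcal{F}$ and every such $(x^{\ast},\alpha)$ with $x^{\ast}\neq0$ one has $\langle x^{\ast},x\rangle-\alpha\le\langle x^{\ast},x-z\rangle\le\Vert x^{\ast}\Vert\,\Vert x-z\Vert$; dividing by $\Vert x^{\ast}\Vert$, taking the infimum over $z$ and the supremum over $(x^{\ast},\alpha)$ gives ``$\ge$'' in \eqref{eq_distance_formula}, and, since $C_{\mathcal{J}}(p)\subset\mathrm{cl}^{\ast}C_{\mathcal{J}}(p)$, also that the right-hand side of \eqref{distC(p)} is $\le\mathrm{dist}(x;\mathcal{F})$.

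For ``$\le$'' in \eqref{eq_distance_formula}, assume $x\notin\mathcal{F}$ (otherwise both sides vanish) and put $r:=\mathrm{dist}(x;\mathcal{F})>0$. Separating the open ball $\{y\in X:\Vert y-x\Vert<r\}$ from $\mathcal{F}$ by the Hahn--Banach theorem gives the classical dual description of the distance,
\begin{equation*}
r=\sup\big\{\langle x^{\ast},x\rangle-\varphi(x^{\ast})\ \big|\ \Vert x^{\ast}\Vert\le1\big\},\qquad\varphi(x^{\ast}):=\sup_{z\in\mathcal{F}}\langle x^{\ast},z\rangle .
\end{equation*}
Since $\sigma_{\mathcal{J}}(p)$ is consistent, the extended Farkas lemma (\cite[Lemma~2.1]{CLMP09}) identifies $\mathrm{epi}\,\varphi$ with the weak$^{\ast}$-closed convex cone $K:=\mathrm{cl}^{\ast}\mathrm{cone}\big(\{(a_{t}^{\ast},b_{t}+p_{j})\mid t\in T_{j},\ j\in J\}\cup\{(0_{X^{\ast}},1)\}\big)$, which turns the identity above into $r=\sup\{\langle x^{\ast},x\rangle-\alpha\mid(x^{\ast},\alpha)\in K,\ \Vert x^{\ast}\Vert\le1\}$. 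The remaining task is to bound this by $\sup_{(x^{\ast},\alpha)\in\mathrm{cl}^{\ast}C_{\mathcal{J}}(p)}[\langle x^{\ast},x\rangle-\alpha]_{+}/\Vert x^{\ast}\Vert$. Here I would fix $(x^{\ast},\alpha)\in K$ with $\Vert x^{\ast}\Vert\le1$ and $\langle x^{\ast},x\rangle-\alpha>0$ (the other pairs being harmless), write it as a weak$^{\ast}$ limit of elements $\sum_{t}\lambda_{t}^{\nu}(a_{t}^{\ast},b_{t}+p_{j})+\mu_{\nu}(0_{X^{\ast}},1)$ with $\lambda^{\nu}\in\mathbb{R}_{+}^{(T)}$ and $\mu_{\nu}\ge0$, discard the $(0_{X^{\ast}},1)$-summand (which only increases $\langle\cdot,x\rangle-\cdot$), and renormalize by the total mass to land in $C_{\mathcal{J}}(p)$; the SSC, in the form $\langle v^{\ast},\widehat{x}\rangle\le d-\rho$ for all $(v^{\ast},d)\in C_{\mathcal{J}}(p)$ with the uniform slack $\rho:=-\sup_{t\in T_{j},\,j\in J}[\langle a_{t}^{\ast},\widehat{x}\rangle-b_{t}-p_{j}]>0$, is what controls the approximating quantities and permits passage to a weak$^{\ast}$ limit inside $\mathrm{cl}^{\ast}C_{\mathcal{J}}(p)$ while keeping the ratio $[\langle\cdot,x\rangle-\cdot]_{+}/\Vert\cdot\Vert$ at least $r$.

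For \eqref{distC(p)} when $X$ is reflexive, the weak$^{\ast}$ topology $\sigma(X^{\ast},X)$ on $X^{\ast}$ coincides with the weak topology $\sigma(X^{\ast},X^{\ast\ast})$, so by Mazur's theorem the weak$^{\ast}$ closure of the convex set $C_{\mathcal{J}}(p)$ equals its norm closure $\overline{C_{\mathcal{J}}(p)}$. The map $(x^{\ast},\alpha)\mapsto[\langle x^{\ast},x\rangle-\alpha]_{+}/\Vert x^{\ast}\Vert$ is norm-continuous on $\{x^{\ast}\neq0\}$, every point of $\overline{C_{\mathcal{J}}(p)}$ with $x^{\ast}\neq0$ is a norm limit of points of $C_{\mathcal{J}}(p)$ with nonzero first coordinate, and the points with $x^{\ast}=0$ again contribute $0$ on both sides; hence the supremum over $\mathrm{cl}^{\ast}C_{\mathcal{J}}(p)=\overline{C_{\mathcal{J}}(p)}$ equals the one over $C_{\mathcal{J}}(p)$, and combining this with \eqref{eq_distance_formula} yields \eqref{distC(p)}.

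The step I expect to be the main obstacle is the passage, in the proof of ``$\le$'', from the Farkas cone $K$ to $\mathrm{cl}^{\ast}C_{\mathcal{J}}(p)$: one must simultaneously remove the $(0_{X^{\ast}},1)$-direction, renormalize the dual vectors onto $C_{\mathcal{J}}(p)$, and interchange a supremum with a weak$^{\ast}$ closure, while the norm on $X^{\ast}$ is only weak$^{\ast}$ lower semicontinuous, so that the denominators $\Vert x^{\ast}\Vert$ need not converge along the approximating nets. Making this rigorous is precisely where the quantitative content of the SSC (the uniform slack $\rho>0$) is indispensable, and it also explains why the sharper formula \eqref{distC(p)}, in which $\mathrm{cl}^{\ast}C_{\mathcal{J}}(p)$ is replaced by $C_{\mathcal{J}}(p)$, is stated under the additional reflexivity hypothesis on $X$.
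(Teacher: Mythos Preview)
The paper does not actually prove this lemma: it is quoted verbatim from \cite[Lemma~4.3]{CLMP09} and no argument is supplied here beyond the citation and the clarifying Remark that follows. So there is no ``paper's own proof'' to compare your attempt against; the authors simply import the result.

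As for the content of your attempt: the inequality ``$\ge$'' in \eqref{eq_distance_formula} is handled correctly, and your derivation of \eqref{distC(p)} from \eqref{eq_distance_formula} in the reflexive case via Mazur's theorem and norm-continuity of the quotient away from $x^{\ast}=0$ is also fine. The genuine gap is exactly the one you flag yourself: in the ``$\le$'' direction you stop at a sketch. After writing $r=\sup\{\langle x^{\ast},x\rangle-\alpha\mid(x^{\ast},\alpha)\in K,\ \Vert x^{\ast}\Vert\le1\}$ you still need to show, for a given $(x^{\ast},\alpha)\in K$ with $\Vert x^{\ast}\Vert\le1$ and $\langle x^{\ast},x\rangle-\alpha>0$, that there exists $(y^{\ast},\beta)\in\mathrm{cl}^{\ast}C_{\mathcal{J}}(p)$ with $[\langle y^{\ast},x\rangle-\beta]_{+}/\Vert y^{\ast}\Vert\ge\langle x^{\ast},x\rangle-\alpha$. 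Your plan (drop the $(0,1)$-summand, renormalize by the total mass $\gamma_{\nu}=\sum_{t}\lambda_{t}^{\nu}$, pass to a weak$^{\ast}$ limit) is the right one, but two points need to be made precise: (i) why the totals $\gamma_{\nu}$ stay bounded and bounded away from zero along a suitable subnet, so that the renormalized net has a weak$^{\ast}$ cluster point in $\mathrm{cl}^{\ast}C_{\mathcal{J}}(p)$ with nonzero first coordinate; and (ii) why the desired inequality on the ratio survives the limit, given that $\Vert\cdot\Vert$ is only weak$^{\ast}$ lower semicontinuous. Both are controlled by the uniform slack $\rho>0$ at the strong Slater point $\widehat{x}$, exactly as you anticipate, but the argument must be written out; as it stands, this part of your proposal is an outline rather than a proof. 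If you want a fully detailed version, consult the source \cite[Lemma~4.3]{CLMP09} that the present paper cites.
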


\begin{remark}
\emph{According to the extended Farkas Lemma in \cite[Lemma 2.1]
{CLMP09} the feasibility of }$\sigma _{\mathcal{J}}\left( p\right)
$\emph{ ensures that }$\alpha \le 0$\emph{\ whenever }$\left(
0,\alpha \right)\in{\mathrm{\small
cl}}^{\ast}C_{\mathcal{J}}\left(p\right),$\emph{ and then the
convention }$0/0:=0$ \emph{is applied. Moreover,
\cite[Example~4.4] {CLMP09} shows that the simplified expression
(\ref{distC(p)}) may fail for the nonreflexive Asplund space
}$X=c_{0}$\emph{\ of all sequences converging to zero endowed with
the supremum norm.}
\end{remark}

\section{Quantitative Stability of Linear Systems under Block Perturbations}

The main result of this section is Theorem~\ref{Th_norm lip j},
where an expression for the coderivative norm and the exact
Lipschitzian bound of the feasible solution set mapping of
block-perturbed linear inequality systems is provided under either
the coefficient boundedness $\left\{ a_{t}^{\ast },~t\in T\right\}
$ or the reflexivity of the decision space $X.$ To accomplish
this, we proceed the following chain of technical
lemmas.\vspace*{0.05in}

Recall that $\mathcal{F}_{\mathcal{J}}:l_{\infty
}(J)\rightrightarrows X$ is defined by (\ref{feasible j}) with an
arbitrary Banach decision space $X$ unless otherwise stated.
Moreover, the zero vector or function in all the spaces under
consideration are simply denoted by $0$.

\begin{lemma}{\bf (relationships between exact Lipschitzian bounds
of block-perturbed systems).}\label{Lem_lip refinement} Let
$\overline{x}\in\mathcal{F}_{\mathcal{J}}\left(0\right)$. Then we
have
\begin{equation*}
\mathrm{lip}\,\mathcal{F}_{\min }\left( 0,\overline{x}\right)\le
\mathrm{lip}\,\mathcal{F}_{\mathcal{J}}\left(
0,\overline{x}\right)\le\mathrm{lip} \,\mathcal{F}_{\max }\left(
0,\overline{x}\right)
\end{equation*}
in the notation of \eqref{J min max}.
\end{lemma}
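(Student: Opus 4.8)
The key observation is that these inequalities reduce to comparing feasible solution maps under perturbation sets of different ``granularity,'' and the natural tool is the quotient representation \eqref{eq_quotient} of the exact Lipschitzian bound.

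The plan is as follows. First I would record the elementary set-theoretic relations among the three systems. For any constant parameter, i.e.\ $p\equiv c$ with $c\in\R$, the three perturbed systems $\sigma_{\min}(c)$, $\sigma_{\mathcal{J}}(c_{\mathcal J})$ (with all blocks shifted by $c$), and $\sigma_{\max}(c_{\max})$ coincide, since a single scalar shift of all right-hand sides is a legitimate parameter in each of the three parameter spaces $l_\infty(\{T\})=\R$, $l_\infty(J)$, and $l_\infty(T)$. More generally, any block-structured parameter $p=(p_j)_{j\in J}$ for $\mathcal J$ induces a parameter $\widetilde p=(\widetilde p_t)_{t\in T}$ for $\mathcal J_{\max}$ via $\widetilde p_t:=p_j$ for $t\in T_j$, with $\|\widetilde p\|=\|p\|$ and $\mathcal F_{\max}(\widetilde p)=\mathcal F_{\mathcal J}(p)$; similarly $\R\ni c\mapsto (c)_{j\in J}$ embeds $l_\infty(\{T\})$ isometrically into $l_\infty(J)$ with the corresponding feasible sets agreeing. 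So we have isometric embeddings of parameter spaces $l_\infty(\{T\})\hookrightarrow l_\infty(J)\hookrightarrow l_\infty(T)$ under which the feasible solution maps restrict correctly.

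Second, I would use these embeddings directly in the quotient formula \eqref{eq_quotient}. For the right-hand inequality $\mathrm{lip}\,\mathcal F_{\mathcal J}(0,\ox)\le\mathrm{lip}\,\mathcal F_{\max}(0,\ox)$: given parameters $p,u\in l_\infty(J)$ and $x\in X$, pass to $\widetilde p,\widetilde u\in l_\infty(T)$; since $\mathcal F_{\mathcal J}(p)=\mathcal F_{\max}(\widetilde p)$ and likewise for $u$, and $\|\widetilde p-\widetilde u\|=\|p-u\|$, the Lipschitz-like inclusion \eqref{eq_Lips} for $\mathcal F_{\max}$ on a neighborhood of $(0,\ox)$ transfers verbatim to $\mathcal F_{\mathcal J}$ on the preimage neighborhood (which is a neighborhood of $0$ because the embedding is an isometry into a closed subspace), giving the modulus inequality; passing to infima yields the bound. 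The left-hand inequality $\mathrm{lip}\,\mathcal F_{\min}(0,\ox)\le\mathrm{lip}\,\mathcal F_{\mathcal J}(0,\ox)$ is entirely analogous using the embedding $\R\hookrightarrow l_\infty(J)$. Alternatively — and perhaps more cleanly — I would argue through \eqref{eq_quotient}: $\mathcal F_{\mathcal J}^{-1}(x)$ and $\mathcal F_{\max}^{-1}(x)$ are related so that $\mathrm{dist}(p;\mathcal F_{\mathcal J}^{-1}(x))=\mathrm{dist}(\widetilde p;\mathcal F_{\max}^{-1}(x))$ (the constraint ``$x$ feasible for $\sigma(p)$'' is the same constraint), while $\mathrm{dist}(x;\mathcal F_{\mathcal J}(p))=\mathrm{dist}(x;\mathcal F_{\max}(\widetilde p))$, so the $\limsup$ defining $\mathrm{lip}\,\mathcal F_{\mathcal J}$ is a $\limsup$ of the same quotient over a smaller collection of $(p,x)$, hence $\le$.

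The main subtlety to handle carefully is that the neighborhoods $U,V$ in \eqref{eq_Lips} must be genuine neighborhoods in the respective parameter spaces: when I pull back a neighborhood of $0$ in $l_\infty(T)$ to one in $l_\infty(J)$ I need the embedding $p\mapsto\widetilde p$ to be an isometry onto its image (which it is, since the sup over $T$ of a block-constant function equals the sup over $J$), so that an $\varepsilon$-ball pulls back to an $\varepsilon$-ball; and I should note that the graph closedness needed to invoke the earlier results holds for all three maps, as already observed in the text preceding Proposition~\ref{pr coderivJ}. A final remark worth making is that, by Lemma~\ref{Lemma1}, all three moduli are simultaneously finite or infinite (each equals $\infty$ exactly when the SSC fails, a condition independent of the partition), so the inequalities are non-vacuous precisely in the SSC case and are trivially $\infty\le\infty\le\infty$ otherwise; this makes the statement robust without any case analysis, though the quotient/embedding argument covers both cases uniformly anyway.
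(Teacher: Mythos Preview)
Your approach is essentially the same as the paper's: both rely on the isometric embeddings $\R\hookrightarrow l_\infty(J)\hookrightarrow l_\infty(T)$ under which the feasible solution maps agree, and then pass the Lipschitz information through. Your first argument, transferring the inclusion \eqref{eq_Lips} along the embedding, is clean and correct.

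There is, however, one incorrect claim in your alternative argument via \eqref{eq_quotient}. You assert that
\[
\mathrm{dist}\big(p;\mathcal F_{\mathcal J}^{-1}(x)\big)=\mathrm{dist}\big(\widetilde p;\mathcal F_{\max}^{-1}(x)\big),
\]
with the justification that ``the constraint $x$ feasible for $\sigma(p)$ is the same constraint.'' This is not an equality in general: the set $\mathcal F_{\max}^{-1}(x)\subset l_\infty(T)$ contains parameters that are \emph{not} block-constant, so it properly contains the embedded image of $\mathcal F_{\mathcal J}^{-1}(x)$, and the distance from $\widetilde p$ to the larger set can be strictly smaller. What does hold is the inequality
\[
\mathrm{dist}\big(p;\mathcal F_{\mathcal J}^{-1}(x)\big)\ge \mathrm{dist}\big(\widetilde p;\mathcal F_{\max}^{-1}(x)\big),
\]
which is precisely what the paper writes and proves. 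Fortunately this inequality still goes the right way: combined with the equality of numerators, each quotient for $\mathcal F_{\mathcal J}$ is dominated by the corresponding quotient for $\mathcal F_{\max}$ at the embedded point, and the full $\limsup$ for $\mathcal F_{\max}$ is over an even larger family, so $\mathrm{lip}\,\mathcal F_{\mathcal J}(0,\ox)\le\mathrm{lip}\,\mathcal F_{\max}(0,\ox)$ follows. So your conclusion survives, but the stated equality should be weakened to $\ge$.
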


\begin{proof}
Consider the nontrivial case when SSC is satisfied at the nominal
system $\sigma\left(0\right)$; otherwise all the exact
Lipschitzian bounds are $\infty$ according to the equivalence
(i)$\Longleftrightarrow $(iii) in
Lemma~\ref{Lemma1}$)${\normalsize . }Note that the mappings
$\mathcal{F}_{\min },\,\mathcal{F}_{\mathcal{J}}\,$, and
$\mathcal{F}_{\max }$ act in the spaces $\mathbb{R}$, $l_{\infty
}(J)$, and $l_{\infty }(T)$, respectively. For each $\rho\in
\mathbb{R}$ let $p_{\rho }$ be the constant function $p_{\rho
}\equiv\rho$ on $J,$ and for each {\normalsize $p\in l_{\infty
}\left( J\right) $} denote by {\normalsize $p_{T}$ }the constant
by blocks function on $T$ defined as $p_{j}$ on block $T_{j},$
$j\in J.$ Then the proof of the lemma relies on the observation
that {\normalsize
\begin{equation*}
\mathrm{dist}\left( \rho ;\mathcal{F}_{\min }^{-1}\left( x\right)
\right) \geq \mathrm{dist}\left( p_{\rho
};\mathcal{F}_{\mathcal{J}}^{-1}\left( x\right) \right)\;\mbox{
and }\;\mathrm{dist}\left(
p;\mathcal{F}_{\mathcal{J}}^{-1}\left(x\right)\right)\ge\mathrm{dist}\left(
p_{T};\mathcal{F}_{\max }^{-1}\left( x\right)\right)
\end{equation*}
for any $x\in X$. In more details, for the first inequality (and
similarly for the second one) observe that
$\mathcal{F}_{\mathcal{J}}^{-1}\left(x\right)=\varnothing$ yields
$\mathcal{F}_{\min }^{-1}\left( x\right) =\varnothing $. Consider
further the nontrivial case when both sets are nonempty. Thus we
get for some sequence $\left\{\rho
_{r}\right\}_{r\in\mathbb{N}}\subset\mathcal{F}_{\min }^{-1}\left(
x\right)$ that
\begin{equation*}
\mbox{dist}\left(\rho ;\mathcal{F}_{\min }^{-1}\left(x\right)
\right)=\lim_{r\in\mathbb{N}}\left\vert \rho-\rho
_{r}\right\vert=\lim_{r\in \mathbb{N}}\left\Vert p_{\rho }-p_{\rho
_{r}}\right\Vert \geq \mbox{dist} \left( p_{\rho
};\mathcal{F}_{\mathcal{J}}^{-1}\left( x\right)\right)
\end{equation*}
by taking into account that }${\normalsize \rho }${\normalsize
$_{r}\in \mathcal{F}_{\min }^{-1}\left( x\right)$ if and only if
$p_{\rho _{r}}\in \mathcal{F}_{\mathcal{J}}^{-1}\left(x\right)$.}

Finally, we appeal to the Lipschitzian bound representation
(\ref{eq_quotient}) combined with the facts that
$$
\mathcal{F}_{\min }\left(\rho \right)
=\mathcal{F}_{\mathcal{J}}\left( p_{\rho }\right)\;\mbox{ and }\;
\mathcal{F}_{\mathcal{J}}\left(p\right)=\mathcal{F}_{\max}\left(
p_{T}\right),
$$
which thus completes the proof of the lemma.
\end{proof}

\begin{lemma}{\bf (relationship between coderivative norms for block-perturbed systems).}
\label{Lem cod norm j} {\normalsize Take any $\overline{x}\in
\mathcal{F}_{\mathcal{J}}\left( 0\right)$ and consider also the
mapping } $\mathcal{F}_{\min }\colon\mathbb{R} \rightrightarrows
X.$ {\normalsize Then we have the relationship}
\begin{equation}
\left\Vert D^{\ast}\mathcal{F}_{\min }\left(0,\overline{x}\right)
\right\Vert\le\left\Vert D^{\ast}\mathcal{F}_{\mathcal{J}}\left(
0, \overline{x}\right)\right\Vert.\label{eq norm1<j}
\end{equation}
\end{lemma}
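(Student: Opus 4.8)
The plan is to transfer the computation of $\|D^{\ast}\mathcal{F}_{\min}(0,\overline{x})\|$ to that of $\|D^{\ast}\mathcal{F}_{\mathcal{J}}(0,\overline{x})\|$ through the canonical isometric embedding
\[
\iota\colon\mathbb{R}\longrightarrow l_{\infty}(J),\qquad \iota(\rho):=\rho\,\mathbf{1}_{J},
\]
where $\mathbf{1}_{J}\in l_{\infty}(J)$ has all its coordinates equal to $1$; thus $\iota(\rho)=p_{\rho}$ in the notation of the proof of Lemma~\ref{Lem_lip refinement}, and $\mathcal{F}_{\min}(\rho)=\mathcal{F}_{\mathcal{J}}(\iota(\rho))$ for every $\rho\in\mathbb{R}$. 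First I would record the elementary facts that $\|\iota^{\ast}\|=\|\iota\|=1$ and that the adjoint $\iota^{\ast}\colon l_{\infty}(J)^{\ast}=ba(J)\to\mathbb{R}$ is given by $\iota^{\ast}\mu=\langle\mu,\mathbf{1}_{J}\rangle$, so in particular $\iota^{\ast}\delta_{j}=\langle\delta_{j},\mathbf{1}_{J}\rangle=1$ for all $j\in J$. Then everything reduces to establishing the inclusion
\[
D^{\ast}\mathcal{F}_{\min}(0,\overline{x})(x^{\ast})\subset\iota^{\ast}\bigl(D^{\ast}\mathcal{F}_{\mathcal{J}}(0,\overline{x})(x^{\ast})\bigr)\qquad\text{for every }x^{\ast}\in X^{\ast},
\]
because then each $p^{\ast}$ in the left-hand side is of the form $\iota^{\ast}q^{\ast}$ with $q^{\ast}\in D^{\ast}\mathcal{F}_{\mathcal{J}}(0,\overline{x})(x^{\ast})$, hence $|p^{\ast}|\le\|\iota^{\ast}\|\,\|q^{\ast}\|=\|q^{\ast}\|$, and taking the supremum over $\|x^{\ast}\|\le1$ yields \eqref{eq norm1<j} straight from the definition \eqref{eq_norm coderiv} of the coderivative norm.

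To prove this inclusion I would appeal to the description of both coderivatives supplied by Proposition~\ref{pr coderivJ}; for the minimum partition $\mathcal{J}_{\min}=\{T\}$, whose single Dirac measure is identified with the scalar $1$, the proposition says that $p^{\ast}\in D^{\ast}\mathcal{F}_{\min}(0,\overline{x})(x^{\ast})$ iff $(p^{\ast},-x^{\ast},-\langle x^{\ast},\overline{x}\rangle)\in\mathrm{cl}^{\ast}\mathrm{cone}\{(-1,a_{t}^{\ast},b_{t})\mid t\in T\}$. Given such a $p^{\ast}$, I would choose a net $\bigl(\sum_{t\in D_{\nu}}\lambda_{t}^{\nu}(-1,a_{t}^{\ast},b_{t})\bigr)_{\nu}$, with $D_{\nu}\subset T$ finite and $\lambda_{t}^{\nu}\ge0$, converging to $(p^{\ast},-x^{\ast},-\langle x^{\ast},\overline{x}\rangle)$ in the product $w^{\ast}$-topology of $\mathbb{R}\times X^{\ast}\times\mathbb{R}$, and for $t\in T$ let $j(t)$ be the unique block index with $t\in T_{j(t)}$. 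The heart of the argument is to lift this net to $q_{\nu}:=\sum_{t\in D_{\nu}}\lambda_{t}^{\nu}\bigl(-\delta_{j(t)},a_{t}^{\ast},b_{t}\bigr)$, whose terms lie in $\mathrm{cone}\{(-\delta_{j},a_{t}^{\ast},b_{t})\mid j\in J,\ t\in T_{j}\}$: its $X^{\ast}$- and final $\mathbb{R}$-components are unchanged and still converge to $-x^{\ast}$ and $-\langle x^{\ast},\overline{x}\rangle$, while its $ba(J)$-component $-\sum_{t\in D_{\nu}}\lambda_{t}^{\nu}\delta_{j(t)}$ has norm $\sum_{t\in D_{\nu}}\lambda_{t}^{\nu}$ in $ba(J)$, which equals minus the first (convergent, hence bounded) component of the original net. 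Hence these measures remain in a fixed multiple of the unit ball of $ba(J)$, which is $w^{\ast}$-compact by Banach--Alaoglu, so a subnet of $(q_{\nu})$ converges $w^{\ast}$ to a point $(q^{\ast},-x^{\ast},-\langle x^{\ast},\overline{x}\rangle)\in\mathrm{cl}^{\ast}\mathrm{cone}\{(-\delta_{j},a_{t}^{\ast},b_{t})\mid j\in J,\ t\in T_{j}\}$; by Proposition~\ref{pr coderivJ} this means $q^{\ast}\in D^{\ast}\mathcal{F}_{\mathcal{J}}(0,\overline{x})(x^{\ast})$, and evaluating the $w^{\ast}$-continuous functional $\mu\mapsto\langle\mu,\mathbf{1}_{J}\rangle=\iota^{\ast}\mu$ along that subnet gives $\iota^{\ast}q^{\ast}=\lim\bigl(-\sum_{t\in D_{\nu}}\lambda_{t}^{\nu}\bigr)=p^{\ast}$, as required.

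The main obstacle, then, is precisely this passage from the cone in $\mathbb{R}\times X^{\ast}\times\mathbb{R}$ to the much larger cone in $ba(J)\times X^{\ast}\times\mathbb{R}$: replacing the scalars $\lambda_{t}^{\nu}$ by Dirac masses at the block indices $j(t)$ is immediate, but one must read off the a priori norm bound for the lifted measures from convergence of the scalar component before Banach--Alaoglu can be invoked, and only then reappeal to Proposition~\ref{pr coderivJ}. I would also note that no Slater-type assumption is needed here, since Proposition~\ref{pr coderivJ} holds for every $\overline{x}\in\mathcal{F}_{\mathcal{J}}(0)$; when $\sigma(0)$ fails the SSC, both sides of \eqref{eq norm1<j} are in fact $\infty$ by Lemma~\ref{Lemma1} (equivalence (ii)$\Longleftrightarrow$(iii)) together with positive homogeneity of the coderivative, so the stated inequality is covered in that case as well. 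Everything else is routine bookkeeping with $\iota^{\ast}$ and the definition of the coderivative norm.
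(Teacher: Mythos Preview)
Your proof is correct and follows essentially the same route as the paper: both invoke Proposition~\ref{pr coderivJ}, lift the approximating net from $\mathbb{R}\times X^{\ast}\times\mathbb{R}$ to $ba(J)\times X^{\ast}\times\mathbb{R}$ by replacing the scalar $-1$ with the Dirac mass $-\delta_{j(t)}$, and appeal to Banach--Alaoglu to extract a $w^{\ast}$-convergent subnet. The only cosmetic difference is that the paper first normalizes by $\gamma_{\nu}=\sum_{t}\lambda_{t\nu}$ (hence restricts to the nontrivial case $\mu\neq 0$) and then checks that the limiting measure has norm exactly $1$, whereas your packaging via $\iota^{\ast}$ avoids both the normalization and that extra verification.
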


\begin{proof}
Observe that {\normalsize $\mathcal{F}_{\mathcal{J}}\left(
0\right)=$}$ \mathcal{F}_{\min }\left( 0\right)$ since both sets
are nothing else but the nominal feasible set. Hence
$\overline{x}\in \mathcal{F}_{\min }\left( 0\right)$. According to
the coderivative norm definition in (\ref{eq_norm coderiv}), pick
arbitrarily $x^{\ast}\in X^{\ast}$ with $\left\Vert x^{\ast
}\right\Vert\le 1$ and consider the nontrivial case when there
exists $\mu \in \mathbb{R}\backslash \{0\}$ with $\mu \in D^{\ast
}\mathcal{F }_{\min }\left( 0,\overline{x}\right) \left( x^{\ast
}\right)$. The coderivative calculation in Proposition~\ref{pr
coderivJ} entails the existence of {\normalsize a net
$\left\{\lambda _{\nu }\right\} _{\nu \in \mathcal{N} } $ with
$\lambda _{\nu }=\left( \lambda _{t\nu }\right) _{t\in T}\in
\mathbb{R}_{+}^{\left(T\right) }$ as }$\nu ${\normalsize $\in
$}$\mathcal{N} ${\normalsize \ satisfying
\begin{equation}
\big(\mu ,-x^{\ast},-\left\langle x^{\ast
},\overline{x}\right\rangle \big) =w^{\ast }\text{-}\lim_{\nu \in
\mathcal{N}}\sum_{t\in T}\lambda _{t\nu }\left( -1,a_{t}^{\ast
},b_{t}\right).\label{701}
\end{equation}
Looking at the first coordinates in (\ref{701}) and setting
}$\gamma _{\nu }:=\sum_{t\in T}\lambda _{t\nu }$, we obtain
\begin{equation}
-\mu =\lim_{\nu \in \mathcal{N}}\gamma _{\nu }>0,  \label{703}
\end{equation}
and hence $\gamma _{\nu }>0$ for $\nu$ sufficiently advanced in
the directed set $\mathcal{N}$; say for all $\nu $ without loss of
generality. This gives us the expression {\normalsize
\begin{equation}
\big(\mu ^{-1}x^{\ast },\left\langle \mu ^{-1}x^{\ast },\overline{x}%
\right\rangle \big)=w^{\ast }\text{-}\lim_{\nu \in \mathcal{N}}\sum_{t\in
T}\gamma _{\nu }^{-1}\lambda _{t\nu }\left( a_{t}^{\ast },b_{t}\right) \in %
\mbox{\rm cl}\,^{\ast }C\left( 0\right).  \label{702}
\end{equation}}

For each $\nu\in\mathcal{N}$ we consider the net $\eta _{\nu
}=\left(\eta _{j\nu }\right)_{j\in J}\in \mathbb{R}_{+}^{\left(
J\right)}$ with $\eta _{j\nu }:=\sum_{t\in T_{j}}\gamma _{\nu
}^{-1}\lambda _{t\nu }$, which obviously satisfies the condition
$\sum_{j\in J}\eta _{j\nu }=1.$ {\normalsize Since the net $
\{\sum_{j\in J}\eta _{j\nu }\left(-\delta _{j}\right) \}_{\nu \in
\mathcal{N }}$ is contained in }$\mathbb{B}${\normalsize
$_{l_{\infty }\left( J\right) ^{\ast }}$, the classical
Alaoglu-Bourbaki theorem ensures that a certain subnet (indexed
without relabeling by $\nu \in \mathcal{N}$) weak$^{\ast }$
converges to some $p^{\ast }\in l_{\infty }\left( J\right)^{\ast
}$ with $ \left\Vert p^{\ast }\right\Vert\le 1$. Denoting by $e\in
l_{\infty }\left( J\right)$ the function whose coordinates are
identically one, we get
\begin{equation*}
\left\langle p^{\ast},-e\right\rangle =\lim_{\nu \in
\mathcal{N}}\sum_{t\in j}\eta _{j\nu }=1,
\end{equation*}
and hence $\left\Vert p^{\ast}\right\Vert =1$. Appealing now to
(\ref{702}) gives us, for the subnet under consideration
(}recalling the definition of $\eta _{j\nu })${\normalsize , the
equality
\begin{equation*}
\left( p^{\ast },\mu ^{-1}x^{\ast },\left\langle \mu ^{-1}x^{\ast
}, \overline{x}\right\rangle \right) =w^{\ast }\text{-}\lim_{\nu
\in \mathcal{N} }\sum_{j\in J}\sum_{t\in T_{j}}\gamma _{\nu
}^{-1}\lambda _{t\nu }\left( -\delta _{j},a_{t}^{\ast
},b_{t}\right).
\end{equation*}
Employing further the coderivative description from
Proposition~\ref{pr coderivJ} yields
\begin{equation*}
p^{\ast }\in D^{\ast }\mathcal{F}_{\mathcal{J}}\left(
0,\overline{x}\right) \left( -\mu ^{-1}x^{\ast }\right).
\end{equation*}
Recalling (\ref{703}), the positive homogeneity of the
coderivative ensures }
\begin{equation*}
-\mu p^{\ast }\in D^{\ast }\mathcal{F}_{\mathcal{J}}\left( 0,\overline{x}%
\right)\left( x^{\ast }\right),
\end{equation*}
{\normalsize which implies by definition of the coderivative norm
in (\ref {eq_norm coderiv}) that
\begin{equation*}
\left\Vert D^{\ast}\mathcal{F}_{\mathcal{J}}\left(
0,\overline{x}\right) \right\Vert \ge \left\Vert -\mu p^{\ast
}\right\Vert =-\mu =\left\vert \mu \right\vert.
\end{equation*}
Since }$\mu \in D^{\ast }\mathcal{F}_{\min }\left(
0,\overline{x}\right) \left( x^{\ast }\right) ${\normalsize \ was
chosen arbitrarily, we arrive at (\ref{eq norm1<j}) and thus
complete the proof of the lemma.}
\end{proof}

\begin{remark}
\label{ReM_SS_point}\emph{In the sequel we adopt the convention}
$\sup \varnothing :=0,$ \emph{which makes sense while dealing with
nonnegative numbers. Observe that under this convention we have
for a SS point }$\overline{x}$ \emph{of }$\sigma\left( 0\right)$
\emph{ the equality}
\begin{equation*}
\sup \left\{ \left\Vert u^{\ast }\right\Vert
^{-1}\Big|\;\big(u^{\ast },\left\langle u^{\ast
},\overline{x}\right\rangle \big)\in \mbox{\rm cl} \,^{\ast
}C\left( 0\right) \right\}=0.
\end{equation*}
\emph{In fact, it is easy to check that for a SS point
}$\overline{x}$ \emph{of }$\sigma \left( 0\right)$ \emph{there is
no element }$u^{\ast}\in X^{\ast }$\emph{\ satisfying}
$\big(u^{\ast },\left\langle u^{\ast },\overline{x}\right\rangle
\big)\in \mbox{\rm cl} \,^{\ast }C\left( 0\right) .$ \emph{Note
that the reciprocal is not true in general. To illustrate it,
consider the system } $\sigma \left( 0\right):=\{tx\le 1/t;$
$t=1,2,\ldots\}$ {\emph{in } $\mathbb{R}$. \emph{On one hand,
observe that} $\overline{x}=0$ \emph{is not a SS point. On the
other hand, we have} $\{u^{\ast }\in \mathbb{R}\big|\;\big(u^{\ast
},\left\langle u^{\ast },\overline{x} \right\rangle \big)\in
\mbox{\rm cl}\,^{\ast }C\left( 0\right)\}=\varnothing$.}
\end{remark}

\begin{remark}
\label{ReM_SSC failure}\emph{If SSC fails at }$\sigma \left(
0\right)$, \emph{then Lemma~\ref{Lemma1} ensures that }$\left(
0,0\right) \in \mbox{\rm cl} \,^{\ast }C\left( 0\right)$.
\emph{Under the convention }$0^{-1}:=\infty$ \emph{we have in this
case that}
\begin{equation*}
\sup \left\{ \left\Vert u^{\ast }\right\Vert
^{-1}\Big|\;\big(u^{\ast },\left\langle u^{\ast
},\overline{x}\right\rangle \big)\in \mbox{\rm cl} \,^{\ast
}C\left( 0\right) \right\}=\infty .
\end{equation*}
\end{remark}

\begin{lemma}\label{Lem norm cod 1 formula} {\bf (lower estimate of
the coderivative norm for the minimum partition). }{\normalsize
Consider the mapping} $\mathcal{F}_{\min }\colon
\mathbb{R}\rightrightarrows X$ {\normalsize and pick
$\overline{x}\in \mathcal{F}_{\min }\left( 0\right)$}.
{\normalsize Then we have the estimate
\begin{equation}
\sup \left\{ \left\Vert u^{\ast }\right\Vert ^{-1}\Big|\;\big(u^{\ast
},\left\langle u^{\ast },\overline{x}\right\rangle \big)\in \mbox{\rm cl}%
\,^{\ast }C\left( 0\right) \right\} \leq \left\Vert D^{\ast }\mathcal{F}%
_{\min }\left( 0,\overline{x}\right) \right\Vert .  \label{eq norm F min}
\end{equation}}
\end{lemma}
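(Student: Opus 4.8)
The plan is to establish the inequality \eqref{eq norm F min} by exhibiting, for an arbitrary element $\left(u^{\ast },\left\langle u^{\ast },\overline{x}\right\rangle \right)\in \mbox{\rm cl}\,^{\ast }C\left( 0\right)$ with $u^{\ast }\ne 0$, a corresponding coderivative element witnessing that $\left\Vert u^{\ast }\right\Vert ^{-1}$ is dominated by $\left\Vert D^{\ast }\mathcal{F}_{\min }\left( 0,\overline{x}\right) \right\Vert$. If no such $u^{\ast }$ exists, the left-hand side is $0$ by the convention $\sup\varnothing :=0$ (cf.\ Remark~\ref{ReM_SS_point}) and there is nothing to prove; if SSC fails the left-hand side equals $\infty$ by Remark~\ref{ReM_SSC failure}, but then $\left(0,0\right)\in\mbox{\rm cl}\,^{\ast }C\left( 0\right)$ and Lemma~\ref{Lemma1}(vi)$\Rightarrow$(ii) fails, forcing $\left\Vert D^{\ast }\mathcal{F}_{\min }\left( 0,\overline{x}\right) \right\Vert =\infty$ as well, so the estimate holds trivially. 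Thus the substantive case is when SSC holds and some admissible $u^{\ast }\ne 0$ exists.

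First I would fix such a $u^{\ast }$. By definition of $C\left(0\right)$ and the $w^{\ast }$-closure, there is a net $\left\{\lambda _{\nu }\right\}_{\nu \in \mathcal{N}}$, $\lambda _{\nu }=\left(\lambda _{t\nu }\right)_{t\in T}\in \mathbb{R}_{+}^{\left(T\right)}$ with $\sum_{t\in T}\lambda _{t\nu }=1$, such that
\begin{equation*}
\left(u^{\ast },\left\langle u^{\ast },\overline{x}\right\rangle \right)=w^{\ast }\text{-}\lim_{\nu \in \mathcal{N}}\sum_{t\in T}\lambda _{t\nu }\left( a_{t}^{\ast },b_{t}\right).
\end{equation*}
Since $\mathcal{F}_{\min }\colon \mathbb{R}\rightrightarrows X$ is associated with the minimum partition $\mathcal{J}_{\min }=\left\{T\right\}$, Proposition~\ref{pr coderivJ} (applied with $J$ a singleton, so that $\delta _{j}$ is just the scalar $1$ and $l_{\infty }(J)=\mathbb{R}$) characterizes $D^{\ast }\mathcal{F}_{\min }\left( 0,\overline{x}\right)$ via $w^{\ast }$-closed conic combinations of the triples $\left(-1,a_{t}^{\ast },b_{t}\right)$. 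Multiplying the displayed net by the positive scalar $\left\Vert u^{\ast }\right\Vert ^{-1}$ and appending the first coordinate, I would form the net $\left\Vert u^{\ast }\right\Vert ^{-1}\sum_{t\in T}\lambda _{t\nu }\left(-1,a_{t}^{\ast },b_{t}\right)$, whose last two coordinates $w^{\ast }$-converge to $\left(\left\Vert u^{\ast }\right\Vert ^{-1}u^{\ast },\left\langle \left\Vert u^{\ast }\right\Vert ^{-1}u^{\ast },\overline{x}\right\rangle \right)$ while the first coordinate converges to $-\left\Vert u^{\ast }\right\Vert ^{-1}$. This places $\left(-\left\Vert u^{\ast }\right\Vert ^{-1},-\left\Vert u^{\ast }\right\Vert ^{-1}u^{\ast },-\left\langle \left\Vert u^{\ast }\right\Vert ^{-1}u^{\ast },\overline{x}\right\rangle \right)$ in the relevant $w^{\ast }$-closed cone, so by Proposition~\ref{pr coderivJ} we get $-\left\Vert u^{\ast }\right\Vert ^{-1}\in D^{\ast }\mathcal{F}_{\min }\left( 0,\overline{x}\right)\left( \left\Vert u^{\ast }\right\Vert ^{-1}u^{\ast }\right)$.

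Now normalize the argument: since $\left\Vert \left\Vert u^{\ast }\right\Vert ^{-1}u^{\ast }\right\Vert =1\le 1$, the definition of the coderivative norm in \eqref{eq_norm coderiv} yields $\left\Vert D^{\ast }\mathcal{F}_{\min }\left( 0,\overline{x}\right) \right\Vert \ge \left\vert -\left\Vert u^{\ast }\right\Vert ^{-1}\right\vert =\left\Vert u^{\ast }\right\Vert ^{-1}$. Taking the supremum over all admissible $u^{\ast }$ gives \eqref{eq norm F min}. The one point requiring care—the expected main obstacle—is the passage to the first coordinate: one must check that multiplying a $w^{\ast }$-convergent net by a fixed positive scalar and adjoining the constant first coordinate $-1$ (scaled) still produces a net lying in $\mbox{\rm cl}\,^{\ast }\mathrm{cone}\{\left(-1,a_{t}^{\ast },b_{t}\right)\}$ in the sense demanded by Proposition~\ref{pr coderivJ}; this is where the conic (as opposed to merely convex) structure in Proposition~\ref{pr coderivJ} is used, together with the fact that the scalar $\left\Vert u^{\ast }\right\Vert ^{-1}$ is strictly positive so that the rescaled coefficients still lie in $\mathbb{R}_{+}^{\left(T\right)}$. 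Everything else is a direct bookkeeping of coordinates and an appeal to positive homogeneity of the coderivative, exactly as in the proof of Lemma~\ref{Lem cod norm j}.
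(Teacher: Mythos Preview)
Your approach is essentially identical to the paper's: both dispatch the SSC-failing case by showing the coderivative norm is infinite (you via the equivalences in Lemma~\ref{Lemma1}, the paper by directly exhibiting $-1\in D^{\ast}\mathcal{F}_{\min}(0,\overline{x})(0)$), and in the main case both adjoin the constant first coordinate $-1$ to the approximating convex-combination net for $(u^{\ast},\langle u^{\ast},\overline{x}\rangle)$ and read off, via Proposition~\ref{pr coderivJ}, a coderivative element of absolute value $\|u^{\ast}\|^{-1}$ at a unit-norm argument. One minor sign slip: the limit of your scaled net is $\big(-\|u^{\ast}\|^{-1},\,\|u^{\ast}\|^{-1}u^{\ast},\,\langle\|u^{\ast}\|^{-1}u^{\ast},\overline{x}\rangle\big)$ (positive signs in the last two slots), so Proposition~\ref{pr coderivJ} actually yields $-\|u^{\ast}\|^{-1}\in D^{\ast}\mathcal{F}_{\min}(0,\overline{x})\big(-\|u^{\ast}\|^{-1}u^{\ast}\big)$; this is harmless for the conclusion since the argument still has unit norm.
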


\begin{proof}
Let us see first that $\left\Vert D^{\ast }\mathcal{F}_{\min
}\left( 0, \overline{x}\right)\right\Vert =\infty $ provided that
the SSC fails at $\sigma \left( 0\right) ${\normalsize. Indeed, in
this case Lemma~\ref{Lemma1} yields that }$ \left(0,0\right)\in
\mbox{\rm cl}\,^{\ast}C\left( 0\right)$, which implies the
existence of {\normalsize a net $\left\{ \lambda _{\nu }\right\}
_{\nu \in \mathcal{N}}$ with $\lambda _{\nu }=\left( \lambda
_{t\nu }\right) _{t\in T}\in \mathbb{R}_{+}^{\left( T\right)}$ and
}$\sum_{t\in T}\lambda _{t\nu }=1$ {\normalsize as }$\nu
${\normalsize $\in $}$\mathcal{N}$ {\normalsize \ satisfying
\begin{equation*}
\big(0,0\big)=w^{\ast }\text{-}\lim_{\nu \in
\mathcal{N}}\sum_{t\in T}\lambda _{t\nu }\left( a_{t}^{\ast
},b_{t}\right).
\end{equation*}
The latter obviously entails that $\big(-1,0,0\big)=w^{\ast
}$-$\lim_{\nu \in \mathcal{ N}}\sum_{t\in T}\lambda _{t\nu }\left(
-1,a_{t}^{\ast },b_{t}\right),$ i.e., by Proposition~\ref{pr
coderivJ} we get}
\begin{equation*}
-1\in D^{\ast }\mathcal{F}_{\min }\left( 0,\overline{x}\right)
\left( 0\right).
\end{equation*}
Since $D^{\ast}\mathcal{F}_{\min }\left( 0,\overline{x}\right)$ is
positively homogeneous, the coderivative norm definition gives us
the claimed condition $\left\Vert D^{\ast }\mathcal{F}_{\min
}\left( 0,\overline{x}\right) \right\Vert=\infty$.

{\normalsize Now we consider the nontrivial case when the SSC
holds at $\sigma \left( 0\right) $ and the set of elements
$u^{\ast}\in X^{\ast }$ with $\left( u^{\ast },\left\langle
u^{\ast },\overline{x}\right\rangle \right) \in \mbox{\rm
cl}\,^{\ast }C\left( 0\right)$ is nonempty. Take such an element}
$u^{\ast }${\normalsize. and observe that the fulfillment of the
SSC for $\sigma \left( 0\right) $ ensures that $u^{\ast}\ne 0$
according to Lemma~\ref{Lemma1}. By the choice of }$u^{\ast
}${\normalsize , find a net $\left\{ \lambda _{\nu }\right\} _{\nu
\in \mathcal{N}}$ with $\lambda _{\nu }=\left( \lambda _{t\nu
}\right) _{t\in T}\in \mathbb{R}_{+}^{\left( T\right) }$ and $
\sum_{t\in T}\lambda _{t\nu }=1$ as }$\nu ${\normalsize $\in
$}$\mathcal{N}$ {\normalsize \ satisfying
\begin{equation}
\big(u^{\ast },\left\langle u^{\ast },\overline{x}\right\rangle
\big) =w^{\ast }\text{-}\lim_{\nu \in \mathcal{N}}\sum_{t\in
T}\lambda _{t\nu }\left( a_{t}^{\ast },b_{t}\right) .
\label{eq_502}
\end{equation}
Then (\ref{eq_502}) can be trivially rewritten as }
\begin{equation*}
\big(-1,u^{\ast },\left\langle u^{\ast },\overline{x}\right\rangle \big)%
=w^{\ast }\text{-}\lim_{\nu {\normalsize \in
}\mathcal{N}}\sum_{t\in T}\lambda _{t\nu }\left( -1,a_{t}^{\ast
},b_{t}\right),
\end{equation*}
{\normalsize which implies that }${\normalsize -}1\in D^{\ast
}\mathcal{F}_{\min }\left( 0,\overline{x}\right) \left( -u^{\ast
}\right) .${\normalsize \ Hence hence
\begin{equation*}
-\left\Vert u^{\ast }\right\Vert ^{-1}\in D^{\ast }\mathcal{F}_{\min }\left(
0,\overline{x}\right) \left( -\left\Vert u^{\ast }\right\Vert ^{-1}u^{\ast
}\right) ,
\end{equation*}
which ensures by the definition of the coderivative norm that
\begin{equation*}
\left\Vert D^{\ast }\mathcal{F}_{\min }\left( 0,\overline{x}\right)
\right\Vert \geq \left\Vert u^{\ast }\right\Vert ^{-1}.
\end{equation*}
Since $u^{\ast}$ was chosen arbitrarily from those satisfying
$\left( u^{\ast },\left\langle u^{\ast },\overline{x}\right\rangle
\right) \in \mbox{\rm cl}\,^{\ast }C\left( 0\right) $, we arrive
at the lower estimate ( \ref{eq norm F min}) for the coderivative
norm and thus complete the proof of this lemma.}
\end{proof}\vspace*{0.05in}

Now we are ready to establish the main result of this section.

\begin{theorem} {\bf (evaluation of coderivative norms for
block-perturbed systems).} {\normalsize \label{Th_norm lip j} For
any $\overline{x}\in \mathcal{F}_{ \mathcal{J}}\left( 0\right)$}
{\normalsize we have the relationships
\begin{align*}
\sup \left\{ \left\Vert u^{\ast }\right\Vert
^{-1}\Big|\;\big(u^{\ast },\left\langle u^{\ast
},\overline{x}\right\rangle \big)\in \mbox{\rm cl} \,^{\ast
}C\left( 0\right) \right\} & \leq \left\Vert D^{\ast }\mathcal{F}
_{\min }\left( 0,\overline{x}\right) \right\Vert \leq \left\Vert
D^{\ast }\mathcal{F}_{\mathcal{J}}\left( 0,\overline{x}\right) \right\Vert \\
& \le\mathrm{lip}\,\mathcal{F}_{\mathcal{J}}\left(
0,\overline{x}\right)\le\mathrm{lip}\,\mathcal{F}_{\max }\left(
0,\overline{x}\right) .
\end{align*}}
{\normalsize Furthermore, if either the coefficient set
}$\{a_{t}^{\ast },~t\in T\}${\normalsize \ is bounded in $X^{\ast
}$ or the space }$X$ {\normalsize is reflexive, then all the above
inequalities hold as equalities.}
\end{theorem}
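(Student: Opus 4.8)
The plan is to establish the displayed chain of inequalities unconditionally and then to ``close the loop'' by proving $\mathrm{lip}\,\mathcal{F}_{\max}(0,\ox)\le M$, where $M:=\sup\{\|u^{\ast}\|^{-1}\mid(u^{\ast},\la u^{\ast},\ox\ra)\in\mathrm{cl}^{\ast}C(0)\}$ is the leftmost quantity in the chain; this single bound forces all four inequalities to be equalities. The chain itself costs almost nothing: its first inequality is Lemma~\ref{Lem norm cod 1 formula}, the second is Lemma~\ref{Lem cod norm j}, the last is one half of Lemma~\ref{Lem_lip refinement}, and the middle one $\|D^{\ast}\mathcal{F}_{\mathcal{J}}(0,\ox)\|\le\mathrm{lip}\,\mathcal{F}_{\mathcal{J}}(0,\ox)$ is the one-sided coderivative estimate valid for closed-graph mappings between Banach spaces \cite[Theorem~1.44]{mor06a} (trivial when $\mathrm{lip}\,\mathcal{F}_{\mathcal{J}}(0,\ox)=\infty$, and otherwise obtained by applying that result with every admissible modulus $\ell$ in \eqref{eq_Lips} and passing to the infimum).

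For the case of bounded $\{a_{t}^{\ast},~t\in T\}$ I would simply quote \cite[Theorem~4.6]{CLMP09}: the maximum partition amounts to unstructured right-hand side perturbations ranging in $l_{\infty}(T)$, so that theorem yields $\mathrm{lip}\,\mathcal{F}_{\max}(0,\ox)=M$ directly. The substantive case is that of reflexive $X$, which I would approach through the quotient representation \eqref{eq_quotient} of the exact Lipschitzian bound. First dispose of the trivial situations: if SSC fails at $\sigma(0)$ then Lemma~\ref{Lemma1} and Remark~\ref{ReM_SSC failure} make both sides infinite; otherwise fix a strong Slater point $\widehat x$, note that it remains one for $\sigma_{\max}(p)$ once $\|p\|$ is small enough, and observe $M<\infty$. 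Two elementary identities feed the quotient: the explicit description of $\mathcal{F}_{\max}^{-1}(x)$ gives $\mathrm{dist}(p;\mathcal{F}_{\max}^{-1}(x))=\sup_{t\in T}[\la a_{t}^{\ast},x\ra-(b_{t}+p_{t})]_{+}=:d(p,x)\in[0,\infty]$, and reflexivity makes the distance formula \eqref{distC(p)} of Lemma~\ref{Lem_distance} applicable to $\sigma_{\max}(p)$. Expanding a typical $(x^{\ast},\alpha)\in C_{\max}(p)$ as a finite convex combination of the data $(a_{t}^{\ast},b_{t}+p_{t})$ one checks at once that $\la x^{\ast},x\ra-\alpha\le d(p,x)$; feeding this into \eqref{distC(p)} gives $\mathrm{dist}(x;\mathcal{F}_{\max}(p))\le d(p,x)\cdot\sup\{\|x^{\ast}\|^{-1}\mid(x^{\ast},\alpha)\in C_{\max}(p),\ \la x^{\ast},x\ra>\alpha\}$. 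So everything reduces to the following claim: \emph{given $\ve>0$, for $(p,x)$ sufficiently close to $(0,\ox)$ every $(x^{\ast},\alpha)\in C_{\max}(p)$ with $\la x^{\ast},x\ra>\alpha$ satisfies $\|x^{\ast}\|\ge(M+\ve)^{-1}$.}

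This claim is the main obstacle and the core of the argument; I would prove it by contradiction using weak$^{\ast}$ compactness. If it fails there is a net $(p_{\nu},x_{\nu})\to(0,\ox)$ together with $(x_{\nu}^{\ast},\alpha_{\nu})\in C_{\max}(p_{\nu})$ satisfying $\la x_{\nu}^{\ast},x_{\nu}\ra>\alpha_{\nu}$ and $\|x_{\nu}^{\ast}\|<(M+\ve)^{-1}$. Writing $x_{\nu}^{\ast}=\sum_{t}\lambda_{t\nu}a_{t}^{\ast}$ and replacing the perturbed intercept $\alpha_{\nu}$ by the nominal one $\bar\alpha_{\nu}:=\sum_{t}\lambda_{t\nu}b_{t}$ gives $(x_{\nu}^{\ast},\bar\alpha_{\nu})\in C(0)$ with $|\alpha_{\nu}-\bar\alpha_{\nu}|\le\|p_{\nu}\|$; since $\ox\in\mathcal{F}(0)$ one has $\la x_{\nu}^{\ast},\ox\ra\le\bar\alpha_{\nu}$, and combining this with $\la x_{\nu}^{\ast},x_{\nu}\ra>\alpha_{\nu}$ and the norm bound yields $0\le\bar\alpha_{\nu}-\la x_{\nu}^{\ast},\ox\ra<\|p_{\nu}\|+(M+\ve)^{-1}\|x_{\nu}-\ox\|\to0$. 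Boundedness of $\{x_{\nu}^{\ast}\}$ lets the Alaoglu--Bourbaki theorem provide a weak$^{\ast}$-convergent subnet $x_{\nu}^{\ast}\to u^{\ast}$ with $\|u^{\ast}\|\le(M+\ve)^{-1}$ (weak$^{\ast}$ lower semicontinuity of the dual norm); then $\la x_{\nu}^{\ast},\ox\ra\to\la u^{\ast},\ox\ra$, hence $\bar\alpha_{\nu}\to\la u^{\ast},\ox\ra$, so $(x_{\nu}^{\ast},\bar\alpha_{\nu})\to(u^{\ast},\la u^{\ast},\ox\ra)$ weak$^{\ast}$ in $X^{\ast}\times\mathbb{R}$ and therefore $(u^{\ast},\la u^{\ast},\ox\ra)\in\mathrm{cl}^{\ast}C(0)$. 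Now $u^{\ast}=0$ would force $(0,0)\in\mathrm{cl}^{\ast}C(0)$, contradicting SSC via Lemma~\ref{Lemma1}, whereas $u^{\ast}\ne0$ would give $\|u^{\ast}\|^{-1}\ge M+\ve>M$, contradicting the definition of $M$. The resulting contradiction proves the claim, hence $\mathrm{dist}(x;\mathcal{F}_{\max}(p))\le(M+\ve)\,d(p,x)$ for $(p,x)$ near $(0,\ox)$, and then \eqref{eq_quotient} gives $\mathrm{lip}\,\mathcal{F}_{\max}(0,\ox)\le M+\ve$ for all $\ve>0$, i.e. $\mathrm{lip}\,\mathcal{F}_{\max}(0,\ox)\le M$.

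The routine points I would double-check along the way are: that SSC for $\sigma(0)$ genuinely transfers to $\sigma_{\max}(p)$ for $\|p\|$ small (needed to invoke Lemma~\ref{Lem_distance}); that $\mathcal{F}_{\max}(p)$ is nonempty so that $\mathrm{dist}(x;\mathcal{F}_{\max}(p))$ is finite, which makes the degenerate cases $d(p,x)\in\{0,\infty\}$ harmless in the quotient; the componentwise computation in $l_{\infty}(T)$ behind the formula for $\mathrm{dist}(p;\mathcal{F}_{\max}^{-1}(x))$, valid also when that set is empty and the value is $+\infty$; and that passing to the subnet does not disturb $(p_{\nu},x_{\nu})\to(0,\ox)$. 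None of these presents any difficulty beyond bookkeeping.
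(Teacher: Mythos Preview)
Your proof is correct and follows essentially the same approach as the paper: the chain of inequalities is assembled from the same lemmas and the same one-sided coderivative estimate, the bounded-coefficient case is dispatched by the same citation of \cite[Theorem~4.6]{CLMP09}, and the reflexive case is closed via the distance formula \eqref{distC(p)} together with a weak$^{\ast}$ compactness argument producing a contradictory element $(u^{\ast},\la u^{\ast},\ox\ra)\in\mathrm{cl}^{\ast}C(0)$. The only cosmetic difference is that you isolate the uniform lower bound $\|x^{\ast}\|\ge(M+\ve)^{-1}$ as a standalone claim and argue with nets via Alaoglu--Bourbaki, whereas the paper works directly with the quotient \eqref{eq_quotient}, selects near-maximizers $(x_{r}^{\ast},\alpha_{r})$ of the distance formula, and invokes weak$^{\ast}$ sequential compactness of the dual unit ball available in the reflexive setting.
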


\begin{proof}
{\normalsize The lower bound estimate
\begin{equation}
\Vert D^{\ast }\mathcal{F}_{\mathcal{J}}(0,\bar{x})\Vert \leq \mbox{\rm lip}%
\,\mathcal{F}_{\mathcal{J}}(0,\bar{x})  \label{144}
\end{equation}
is proved in \cite[Theorem~1.44]{mor06a} for general set-valued
mappings between Banach spaces. Now apply (in this order)
Lemmas~\ref{Lem norm cod 1 formula}, \ref{Lem cod norm j}, formula
(\ref{144}), and Lemma~\ref{Lem_lip refinement} to obtain the
claimed chain of inequalities.}

{\normalsize Consider first the case when} the set $\{a_{t}^{\ast
},~t\in T\}${\normalsize \ is bounded in $X^{\ast }.$}
{\normalsize Then applying \cite[Theorem~4.6]{CLMP09} adapted to
the current notation gives us}
\begin{equation}
\mathrm{lip}\,\mathcal{F}_{\max }\left( 0,\overline{x}\right) \leq \sup
\left\{ \left\Vert u^{\ast }\right\Vert ^{-1}\Big|\;\big(u^{\ast
},\left\langle u^{\ast },\overline{x}\right\rangle \big)\in \mbox{\rm cl}%
\,^{\ast }C\left( 0\right) \right\}  \label{eq_upper_bound}
\end{equation}
{\normalsize in the nontrivial case when SSC holds at }$\sigma
\left( 0\right)$; {\normalsize Remark~\ref{ReM_SSC failure}.}

{\normalsize To finish the proof of this theorem, it remains to
establish the same inequality (\ref{eq_upper_bound}), again in the
nontrivial case when the SSC holds at }$\sigma \left( 0\right)
${\normalsize ,\ under the assumption that }$X$ {\normalsize is
{\em reflexive}, in which case the classical Mazur theorem allows
us to replace the weak$^{\ast }$ closure }$ \mbox{\rm cl}\,^{\ast
}C\left( 0\right) $ {\normalsize of the convex set $C\left(
0\right) $ by its norm closure }$\mbox{\rm cl}\,C\left( 0\right) $
{\normalsize. Arguing by contradiction to \eqref{eq_upper_bound},
find }${\normalsize \beta }$ {\normalsize $>0$ such that
\begin{equation}
\mbox{\rm lip}\,\mathcal{F}_{\max }\left( 0,\overline{x}\right)
>\beta >\sup \left\{ \left\Vert u^{\ast }\right\Vert
^{-1}\Big|\;\left( u^{\ast },\left\langle u^{\ast
},\overline{x}\right\rangle \right) \in \mbox{\rm cl} \,C\left(
0\right) \right\}.  \label{eq_103}
\end{equation}\
According to (\ref{eq_quotient}) and the first inequality in
(\ref{eq_103}), there are sequences $p_{r}=\left( p_{tr}\right)
_{t\in T}\rightarrow 0$ and $ x_{r}\rightarrow \overline{x}$ along
which
\begin{equation}
\mbox{dist}\big(x_{r};\mathcal{F}_{\max }(p_{r})\big)>\beta
\,\mbox{dist} \big(p_{r};\mathcal{F}_{\max }^{-1}\left(
x_{r}\right) \big)\;\text{ for all }\;r\in \mathbb{N}.
\label{eq_104}
\end{equation}
By the SSC at $\sigma \left( 0\right)$ we have due to
Lemma~\ref{Lemma1} that $\mathcal{F}_{\max }\left( p_{r}\right)
\ne\varnothing $ for $r\in\N$ sufficiently large; say for all
$r\in\N$ without loss of generality. The imposed SSC at $\sigma
\left( 0\right)$ is also equivalent to the inner/lower
semicontinuity of $\mathcal{F}$}$_{\max }${\normalsize \ around
}$\overline{p }=0${\normalsize \ by \cite[Theorem~5.1]{DGL08},
which entails that
\begin{equation}
\lim_{r\rightarrow \infty }\mbox{dist}\big(x_{r};\mathcal{F}_{\max
}\left( p_{r}\big)\right)=0.\label{semicontinf}
\end{equation}
Moreover, it follows from (\ref{eq_104}) that the quantity
\begin{eqnarray}
\mbox{dist}\big(p_{r};\mathcal{F}_{\max }^{-1}\left( x_{r}\right)
\big)&=&\sup_{t\in T}\left[ \left\langle a_{t}^{\ast
},x_{r}\right\rangle
-b_{t}-p_{tr}\right]_{+}\label{nueva} \\
&=&\sup_{\left( x^{\ast },\alpha \right) \in C_{\max }\left(
p_{r}\right)} \left[ \left\langle x^{\ast },x_{r}\right\rangle
-\alpha \right]_{+}\notag
\end{eqnarray}
is finite. We may assume without loss of generality that the SSC
holds at }$ \sigma _{\max }\left( p_{r}\right) $ {\normalsize for
all} $r.$ {\normalsize Then it follows from
Lemma~\ref{Lem_distance} that
\begin{equation*}
\mbox{dist}\big(x_{r};\mathcal{F}_{\max }\left( p_{r}\right)\big)
=\sup_{\left( x^{\ast },\alpha \right) \in C_{\max }\left(
p_{r}\right) } \frac{\left[ \left\langle x^{\ast
},x_{r}\right\rangle -\alpha \right]_{+}}{ \left\Vert x^{\ast
}\right\Vert },\quad r=1,2,\ldots.
\end{equation*}
This allows us to find $\left( x_{r}^{\ast },\alpha _{r}\right)
\in C_{\max }\left( p_{r}\right) $ as $r\in\N$ satisfying
\begin{equation}
0<\mbox{dist}\big(x_{r},\mathcal{F}_{\max }\left( p_{r}\right)
\big)-\frac{ \left\langle x_{r}^{\ast },x_{r}\right\rangle -\alpha
_{r}}{\left\Vert x_{r}^{\ast }\right\Vert }<\frac{1}{r}.
\label{unosobrer}
\end{equation}
Furthermore, by (\ref{eq_104}) and (\ref{nueva}) we can choose
$\left( x_{r}^{\ast },\alpha _{r}\right)$ in such a way that
\begin{equation}
\beta \,\mbox{dist}\big(p_{r};\mathcal{F}_{\max }^{-1}\left(
x_{r}\right) \big)<\frac{\left\langle x_{r}^{\ast
},x_{r}\right\rangle -\alpha _{r}}{ \left\Vert x_{r}^{\ast
}\right\Vert }\leq \frac{\mbox{dist}\big(p_{r}; \mathcal{F}_{\max
}^{-1}\left( x_{r}\right) \big)}{\left\Vert x_{r}^{\ast
}\right\Vert}.  \label{clave}
\end{equation}
Since dist$(p_{r};\mathcal{F}_{\max }^{-1}\left( x_{r}\right) )>0$
(otherwise both members of (\ref{eq_104}) would be zero), we
deduce from ( \ref{clave}) that
\begin{equation*}
\Vert x_{r}^{\ast }\Vert <\frac{1}{\beta }\;\mbox{ for all
}\;r=1,2,\ldots,
\end{equation*}
and thus, by the weak$^{\ast }$ sequential compactness of the unit
ball in duals to reflexive spaces, select a subsequence $\left\{
x_{r_{k}}^{\ast }\right\} _{k\in \mathbb{N}}$, which weak$^{\ast
}$ converges to some $ x^{\ast }\in X^{\ast }$ satisfying
$\left\Vert x^{\ast }\right\Vert \leq 1/$} ${\normalsize \beta
}${\normalsize . Then we get from (\ref{semicontinf}) and
(\ref{unosobrer}) that
\begin{equation*}
\lim_{k\in \mathbb{N}}\frac{\left\langle x_{r_{k}}^{\ast
},x_{r_{k}}\right\rangle -\alpha _{r_{k}}}{\left\Vert x_{r_{k}}^{\ast
}\right\Vert }=0,
\end{equation*}
which implies in turn that
\begin{equation*}
\lim_{k\in \mathbb{N}}\big(\left\langle x_{r_{k}}^{\ast
},x_{r_{k}}\right\rangle -\alpha _{r_{k}}\big)=0.
\end{equation*}
Since the sequence $\left\{ x_{r_{k}}\right\} _{k\in \mathbb{N}}$
converges in norm to $\overline{x}$, the latter implies that
\begin{equation*}
\lim_{k\in \mathbb{N}}\alpha _{r_{k}}=\lim_{k\in \mathbb{N}}\left\langle
x_{r_{k}}^{\ast },x_{r_{k}}\right\rangle=\left\langle x^{\ast },\overline{x}%
\right\rangle.
\end{equation*}
Taking into account that for each $k\in\mathbb{N}$ we have $\left(
x_{r_{k}}^{\ast },\alpha _{r_{k}}\right) \in C_{\max }\left(
p_{r_{k}}\right) $, there exist $\lambda _{r_{k}}=(\lambda
_{tr_{k}})_{t\in T}$ such that $\lambda _{tr_{k}}\ge 0$, only
finitely many of them are positive,
\begin{equation*}
\sum_{t\in T}\lambda _{tr_{k}}=1,\;\mbox{ and
}\;(x_{r_{k}}^{\ast },\alpha _{r_{k}})=\sum_{t\in T}\lambda _{tr_{k}}\left(
a_{t}^{\ast },b_{t}+p_{tr_{k}}\right) ,\quad k\in \mathbb{N}.
\end{equation*}
Combining all the above gives us the relationships
\begin{eqnarray*}
\big(x^{\ast },\left\langle x^{\ast },\overline{x}\right\rangle
\big)&=&w^{\ast }\text{-}\lim_{k\in \mathbb{N}}(x_{r_{k}}^{\ast
},\alpha _{r_{k}})
\\
&=&w^{\ast }\text{-}\lim_{k\in \mathbb{N}}\sum_{t\in T}\lambda
_{tr_{k}}\left( a_{t}^{\ast },b_{t}+p_{tr_{k}}\right) \\
&=&w^{\ast }\text{-}\lim_{k\in \mathbb{N}}\sum_{t\in T}\lambda
_{tr_{k}}\left( a_{t}^{\ast },b_{t}\right) \in \mbox{\rm cl}\,C\left(
0\right) ,
\end{eqnarray*}
where the last equality comes from }$\lim_{k\rightarrow \infty
}\left\Vert p_{r_{k}}\right\Vert =0.$ {\normalsize Observe finally
that $x^{\ast}\ne 0$ because, by Lemma~\ref{Lemma1}, the linear
infinite system $\sigma \left( 0\right) $ satisfies the SSC. This
allows us to conclude that
\begin{equation*}
\sup \left\{ \left\Vert u^{\ast }\right\Vert ^{-1}\big|\;\big(u^{\ast
},\left\langle u^{\ast },\overline{x}\right\rangle \big)\in \mbox{\rm cl}%
\,C\left( 0\right) \right\} \geq \left\Vert x^{\ast }\right\Vert ^{-1}\geq
\beta ,
\end{equation*}
which contradicts (\ref{eq_103}) and thus completes the proof of the
theorem. \medskip }
\end{proof}

{\normalsize We finish this section with a discussion about some
consequences of the boundedness assumption on the coefficient set
$\left\{a_{t}^{\ast }\;|\;t\in T\right\} \subset X^{\ast }$. First
observe that this assumption yields that only $\varepsilon
$-active indices are relevant in the computation of the supremum
of the previous theorem. The following proposition provides a
useful representation of the characteristic set} $\left\{
\big(u^{\ast },\left\langle u^{\ast },\overline{x}\right\rangle
\big)\in \mbox{\rm cl} \,^{\ast }C\left( 0\right) \right\} ,$
{\normalsize which may be rewritten as }${\normalsize \left\{
\left( \overline{x},-1\right) \right\} ^{\bot }\cap
\mathrm{cl}^{\ast }C\left( 0\right) ,}${\normalsize \ in terms of
the sets }
\begin{equation*}
{\normalsize T_{\varepsilon}\left( \overline{x}\right):=\big\{t\in T\big|%
\;\left\langle a_{t}^{\ast },\overline{x}\right\rangle\ge
b_{t}-\varepsilon \big\},\ \ \varepsilon 0.}
\end{equation*}

\begin{proposition} {\bf (limiting representation of the
characteristic set).} {\normalsize \label{pr} Assume that the
coefficient set $\left\{ a_{t}^{\ast }\;|\;t\in T\right\}$ is
bounded in $X^{\ast }$. Then given $\overline{x} \in
\mathcal{F}_{\mathcal{J}}\left( 0\right) $, we have the
representation
\begin{equation}
\big\{\left( \overline{x},-1\right) \big\}^{\bot }\cap \mbox{\rm
cl}\,^{\ast }C\left( 0\right) =\bigcap_{\varepsilon >0}\mbox{\rm
cl}\,^{\ast } \mbox{\rm co}\,\big\{\left( a_{t}^{\ast
},b_{t}\right) \big|\;t\in T_{\varepsilon }\left(
\overline{x}\right) \big\}. \label{eq_intersect}
\end{equation}}
\end{proposition}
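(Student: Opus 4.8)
The plan is to prove the two inclusions in \eqref{eq_intersect} separately, exploiting the boundedness of $\{a_t^\ast\mid t\in T\}$ to pass from the global convex hull to the $\varepsilon$-active tail. Throughout write $L:=\big\{(\overline{x},-1)\big\}^\perp$, so that $L$ is a weak$^\ast$-closed hyperplane in $X^\ast\times\mathbb{R}$ and the left-hand side of \eqref{eq_intersect} equals $L\cap\mbox{\rm cl}\,^\ast C(0)$. For the easy inclusion ``$\supseteq$'', fix $(u^\ast,\beta)$ in the right-hand side. For every $\varepsilon>0$ we have $T_\varepsilon(\overline{x})\neq\varnothing$ (since $\overline{x}\in\mathcal{F}_{\mathcal{J}}(0)$ is feasible but cannot strictly satisfy a subsystem whose hull's closure contains a point with $\langle u^\ast,\overline{x}\rangle=\beta$ — more simply, if some $T_\varepsilon(\overline{x})$ were empty then $\overline{x}$ would be a strong Slater point and the intersection over all $\varepsilon$ would be empty by Remark~\ref{ReM_SS_point}, making the inclusion trivial), and $\mbox{\rm co}\,\{(a_t^\ast,b_t)\mid t\in T_\varepsilon(\overline{x})\}\subseteq C(0)$, hence its weak$^\ast$ closure is contained in $\mbox{\rm cl}\,^\ast C(0)$; thus $(u^\ast,\beta)\in\mbox{\rm cl}\,^\ast C(0)$. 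To see $(u^\ast,\beta)\in L$, note that every element of the hull $\mbox{\rm co}\,\{(a_t^\ast,b_t)\mid t\in T_\varepsilon(\overline{x})\}$ is a convex combination $\sum\lambda_t(a_t^\ast,b_t)$ with $\langle a_t^\ast,\overline{x}\rangle - b_t\ge -\varepsilon$, so its pairing against $(\overline{x},-1)$ is $\ge -\varepsilon$; this $\ge -\varepsilon$ bound is preserved under weak$^\ast$ limits, so $\langle u^\ast,\overline{x}\rangle - \beta\ge -\varepsilon$ for every $\varepsilon>0$, giving $\langle u^\ast,\overline{x}\rangle\ge\beta$. Combined with feasibility of $\overline{x}$ (which via the extended Farkas Lemma forces $\langle u^\ast,\overline{x}\rangle\le\beta$ for any $(u^\ast,\beta)\in\mbox{\rm cl}\,^\ast C(0)$), we obtain $\langle u^\ast,\overline{x}\rangle=\beta$, i.e.\ $(u^\ast,\beta)\in L$.

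For the nontrivial inclusion ``$\subseteq$'', fix $(u^\ast,\beta)\in L\cap\mbox{\rm cl}\,^\ast C(0)$; then $\beta=\langle u^\ast,\overline{x}\rangle$ and there is a net $\lambda_\nu=(\lambda_{t\nu})_{t\in T}\in\mathbb{R}_+^{(T)}$ with $\sum_t\lambda_{t\nu}=1$ and $(u^\ast,\beta)=w^\ast\text{-}\lim_\nu\sum_t\lambda_{t\nu}(a_t^\ast,b_t)$. Fix $\varepsilon>0$; I want to show $(u^\ast,\beta)\in\mbox{\rm cl}\,^\ast\mbox{\rm co}\,\{(a_t^\ast,b_t)\mid t\in T_\varepsilon(\overline{x})\}$. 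The idea is to split each index $t$ in the support of $\lambda_\nu$ according to whether $t\in T_\varepsilon(\overline{x})$ or $\langle a_t^\ast,\overline{x}\rangle - b_t<-\varepsilon$, writing $\sum_t\lambda_{t\nu}(a_t^\ast,b_t)=s_\nu(x^\ast_{\nu,\mathrm{act}},\alpha_{\nu,\mathrm{act}}) + (1-s_\nu)(x^\ast_{\nu,\mathrm{in}},\alpha_{\nu,\mathrm{in}})$, where $s_\nu$ is the total mass on $T_\varepsilon(\overline{x})$ and the two points are the corresponding normalized convex combinations (ignoring an empty piece if the mass there is zero). The contribution of the inactive part to the pairing with $(\overline{x},-1)$ is $<-\varepsilon$ per unit mass, while the whole thing converges to $\langle u^\ast,\overline{x}\rangle - \beta=0$ and the active part contributes $\ge -\varepsilon'$ for a new small $\varepsilon'$ only if we are careful — actually the active-part pairing is merely bounded below by $-\varepsilon$ as well, which is not enough by itself. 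The key extra input is the boundedness of $\{a_t^\ast\}$: the inactive points $(x^\ast_{\nu,\mathrm{in}},\alpha_{\nu,\mathrm{in}})$ live in a bounded (hence weak$^\ast$ relatively compact) subset of $X^\ast\times\mathbb{R}$ on which $\langle\,\cdot\,,\overline{x}\rangle - (\cdot)\le-\varepsilon$, and the active points likewise lie in a bounded set; passing to a subnet we may assume $s_\nu\to s\in[0,1]$ and both pieces converge weak$^\ast$ to limit points $q_{\mathrm{act}}\in\mbox{\rm cl}\,^\ast\mbox{\rm co}\,\{(a_t^\ast,b_t)\mid t\in T_\varepsilon(\overline{x})\}$ and $q_{\mathrm{in}}$ with $\langle q_{\mathrm{in}},(\overline{x},-1)\rangle\le-\varepsilon$. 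Then $(u^\ast,\beta)=s\,q_{\mathrm{act}}+(1-s)\,q_{\mathrm{in}}$, and pairing with $(\overline{x},-1)$ gives $0=s\langle q_{\mathrm{act}},(\overline{x},-1)\rangle+(1-s)\langle q_{\mathrm{in}},(\overline{x},-1)\rangle$ with the first pairing $\ge-\varepsilon$ (by the argument of the previous paragraph) and the second $\le-\varepsilon$; feasibility of $\overline{x}$ forces both pairings $\le 0$. Since the convex combination of the two equals $0$ and the second summand's pairing is $\le-\varepsilon<0$, we must have $s>0$; but actually I want $s=1$.

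To force $s=1$ I will iterate the $\varepsilon$-splitting argument, or rather run it at two scales: the point is that for the true $(u^\ast,\beta)$ we have $\langle u^\ast,\overline{x}\rangle=\beta$ exactly, so the ``deficit'' must all sit in the active part. Concretely, repeat the above with $\varepsilon$ replaced by a sequence $\varepsilon_n\downarrow 0$; one checks that $s=1$ is forced in the limit because any fixed positive mass $1-s$ on indices with pairing $\le-\varepsilon$ contributes a fixed negative amount that cannot be cancelled by the active part whose pairing is $\ge-\varepsilon$ only — taking $\varepsilon$ strictly smaller than the $\varepsilon_0$ defining the tail in question makes the cancellation impossible unless $1-s=0$. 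Hence $s=1$, i.e.\ $(u^\ast,\beta)=q_{\mathrm{act}}\in\mbox{\rm cl}\,^\ast\mbox{\rm co}\,\{(a_t^\ast,b_t)\mid t\in T_\varepsilon(\overline{x})\}$ for every $\varepsilon>0$, which is the desired inclusion. The main obstacle, and the place where the boundedness hypothesis is genuinely used, is precisely guaranteeing the weak$^\ast$ subnet convergence of the two normalized pieces to honest elements of $X^\ast\times\mathbb{R}$ (an unbounded coefficient set could let the inactive mass ``escape to infinity'' and the bookkeeping above would break down); the rest is a careful but routine convexity-and-limit argument. $\hfill\Box$
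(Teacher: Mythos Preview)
The paper does not give a self-contained argument here; it simply refers to Step~1 in the proof of \cite[Theorem~1]{CGP08}. Your overall strategy --- splitting each approximating convex combination into an $\varepsilon$-active and an $\varepsilon$-inactive part and showing that the inactive mass disappears --- is the natural one and is in the spirit of that reference. The inclusion ``$\supseteq$'' is handled correctly.

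In ``$\subseteq$'' there is a genuine gap. You assert that the inactive normalized points $(x^\ast_{\nu,\mathrm{in}},\alpha_{\nu,\mathrm{in}})$ lie in a bounded subset of $X^\ast\times\mathbb{R}$, but the hypothesis bounds only $\{a_t^\ast\}$, not $\{b_t\}$. For $t\notin T_\varepsilon(\overline{x})$ the inequality $\langle a_t^\ast,\overline{x}\rangle-b_t<-\varepsilon$ gives only a \emph{lower} bound on $b_t$; nothing prevents $b_t\to+\infty$ (take $a_t^\ast\equiv 0$, $b_t=t$, $T=\mathbb{N}$). Hence the weak$^\ast$ limit $q_{\mathrm{in}}$ need not exist, and your decomposition $(u^\ast,\beta)=s\,q_{\mathrm{act}}+(1-s)\,q_{\mathrm{in}}$ is unjustified. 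A second, smaller issue: once you have $0=s\,p_1+(1-s)\,p_2$ with $p_1\le 0$ (feasibility) and $p_2\le-\varepsilon$, the conclusion $s=1$ is immediate, since $s<1$ would force the right side to be $\le(1-s)(-\varepsilon)<0$. The iteration paragraph is unnecessary and, as written, is not a proof.

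The clean repair avoids $q_{\mathrm{in}}$ entirely. First show $s_\nu\to 1$ \emph{before} extracting limits of the pieces: feasibility gives $\langle a_t^\ast,\overline{x}\rangle-b_t\le 0$ for all $t$ and $<-\varepsilon$ for inactive $t$, so
\[
\sum_{t\in T}\lambda_{t\nu}\big(\langle a_t^\ast,\overline{x}\rangle-b_t\big)\;\le\;-(1-s_\nu)\,\varepsilon,
\]
while the left side tends to $\langle u^\ast,\overline{x}\rangle-\beta=0$; hence $s_\nu\to 1$. Next, the \emph{active} piece \emph{is} bounded (for $t\in T_\varepsilon(\overline{x})$ one has $-M\|\overline{x}\|\le b_t\le M\|\overline{x}\|+\varepsilon$ with $M:=\sup_t\|a_t^\ast\|$), so along a subnet $(x^\ast_{\nu,\mathrm{act}},\alpha_{\nu,\mathrm{act}})\overset{w^\ast}{\to}q_{\mathrm{act}}\in\mbox{\rm cl}\,^\ast\mbox{\rm co}\,\{(a_t^\ast,b_t)\mid t\in T_\varepsilon(\overline{x})\}$. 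Finally, $(1-s_\nu)x^\ast_{\nu,\mathrm{in}}\to 0$ in norm (boundedness of $\{a_t^\ast\}$), and combining this with $(1-s_\nu)\big(\langle x^\ast_{\nu,\mathrm{in}},\overline{x}\rangle-\alpha_{\nu,\mathrm{in}}\big)\to 0$ (the inactive contribution to the pairing) yields $(1-s_\nu)\alpha_{\nu,\mathrm{in}}\to 0$. Thus $s_\nu(x^\ast_{\nu,\mathrm{act}},\alpha_{\nu,\mathrm{act}})\to(u^\ast,\beta)$, whence $q_{\mathrm{act}}=(u^\ast,\beta)$, which is the desired membership.
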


\begin{proof}
{\normalsize It follows the lines of justifying Step~1 in the
proof of \cite[ Theorem~1]{CGP08}. Note that both sets in
(\ref{eq_intersect}) are nonempty if and only if $\overline{x}$ is
not a strong Slater point for $\sigma \left( 0\right)$; see
Remark~\ref{ReM_SS_point}.}
\end{proof}\vspace*{0.05in}

{\normalsize Observe that in the continuous case considered in
\cite{CDLP05} (where $T$ is assumed to be a compact Hausdorff
space, $X=\mathbb{R}^{n}$, and the mapping
$t\mapsto\left(a_{t}^{\ast},b_{t}\right)$ is continuous on $T$)
representation (\ref{eq_intersect}) reads as
\begin{equation*}
\big\{\left(\overline{x},-1\right)\big\}^{\bot}\cap C\left(
0\right)=\mbox{\rm co}\,\big\{\left(a_{t}^{\ast },b_{t}\right)
\big|\;\ t\in T_{0}\left(\overline{x}\right) \big\}.
\end{equation*}}

{\normalsize The following example shows that the statement of
Proposition~\ref{pr} is \emph{no longer valid} without the
boundedness assumption on $\left\{a_{t}^{\ast}|\;t\in T\right\}$
and that in the exact bound expression of Theorem~\ref{Th_norm lip
j} via }$\sup \left\{\left\Vert u^{\ast }\right\Vert
^{-1}\big|\;\big(u^{\ast },\left\langle u^{\ast
},\overline{x}\right\rangle \big)\in \mbox{\rm cl}\,^{\ast
}C\left( 0\right)\right\} ${\normalsize \ the set $ \mbox{\rm
cl}\,^{\ast }C\left( 0\right)$ cannot be replaced by $\mathrm{cl}
^{\ast }\mbox{\rm co}\,\left\{ \left( a_{t}^{\ast },b_{t}\right)
\mid t\in T_{\varepsilon }\left( \overline{x}\right)\right\}$ for
some \emph{small }$ \varepsilon>0$; i.e., it is not sufficient to
consider just $\varepsilon $-active constraints.}

\begin{example} {\bf (coefficient boundedness is essential).}
{\normalsize \emph{Consider the countable linear system in
}$\mathbb{R}^{2}$:
\begin{equation*}
\sigma \left( p\right)=\left\{
\begin{array}{ll}
\left( -1\right) ^{t}tx_{1}\leq 1+p_{t}, & t=1,2,\dots, \\
x_{1}+x_{2}\leq 0+p_{0}, & t=0%
\end{array}
\right\}.
\end{equation*}
\emph{The reader can easily check that} \emph{for
}$\overline{x}=0\in\mathbb{R}^{2}$ \emph{and }$0\le\varepsilon<1$
\emph{ we have }
\begin{equation*}
\mathrm{co}\big\{\left( a_{t}^{\ast },b_{t}\right) \big|\;\ t\in
T_{\varepsilon }\left( \overline{x}\right) \big\}=\big\{\left(
1,1,0\right) \big\}\;\rm{ and}
\end{equation*}
\begin{equation*}
\big\{\left( \overline{x},-1\right) \big\}^{\bot }\cap \mbox{\rm cl}\,^{\ast
}C\left( 0\right) =\big\{\left( \alpha ,1,0\right) ,\;\alpha \in \mathbb{R}%
\big\}
\end{equation*}
\emph{It follows furthermore that}
\begin{equation*}
\mathcal{F}_{\max }\left( p\right) =\left\{ 0\right\} \times
\left( -\infty ,p_{0}\right] \;\text{\emph{\ whenever
}}\;\left\Vert p\right\Vert \le 1,
\end{equation*}
\emph{which easily implies that } $\mbox{\rm
lip}\,\mathcal{F}_{\max }\left( 0,\overline{x}\right) =1$\emph{.
Observe however that }$\mbox{\rm lip}\, \mathcal{F}_{\max }\left(
0,\overline{x}\right)$ }\emph{cannot be computed through
}$T_{\varepsilon }\left( \overline{x}\right) $ {\normalsize \emph{
for }$0<\varepsilon $$<1;$ \emph{in fact}
\begin{equation*}
\max \left\{ \left\Vert u^{\ast }\right\Vert ^{-1}\big|\;\left(
u^{\ast },\left\langle u^{\ast },\overline{x}\right\rangle \right)
\in \mathrm{cl} ^{\ast }\mathrm{co}\big\{\left( a_{t}^{\ast
},b_{t}\right) \big|\;t\in T_{\varepsilon }\left(
\overline{x}\right) \big\}\right\} =\frac{1}{\sqrt{2}}.
\end{equation*}}
\end{example}
{\normalsize As mentioned above, it is clear that $\left\{ \left(
\overline{x},-1\right) \right\}^{\bot }\cap \mathrm{cl}^{\ast
}C\left( 0\right) =\varnothing $ when $\overline{x}$ is a SS point
for $\sigma \left( 0\right) $. According to
\cite[Lemma~3.4]{CLMP09}, if $\left\{ a_{t}^{\ast }\;|\;t\in
T\right\} $ is bounded and }$\overline{x}$ {\normalsize is not a
SS point for $\sigma \left( 0\right) $, the set $\left\{ \left(
\overline{x} ,-1\right) \right\} ^{\bot }\cap \mathrm{cl}^{\ast
}C\left( 0\right) $ is nonempty and} $w^{\ast
}${\normalsize-compact in $X^*$}. If in addition the SSC holds at
$\sigma \left( 0\right),$ then the latter set does not contain the
origin and the supremum in Theorem~\ref{Th_norm lip j} becomes a
maximum.

\section{Applications to Convex Systems}

In this section we apply the results above to analyze the
quantitative stability of infinite convex inequality systems by
using the linearization procedure via the Fenchel-Legendre
conjugate. This procedure splits each convex inequality into a
block of linear ones so that a natural perturbation framework for
the linearized system is a block perturbation setting. In what
follows we consider the \emph{parameterized convex inequality
system} given by
\begin{equation}
\sigma (p):=\big\{f_{j}\left( x\right) \leq p_{j},\ j\in J\big\},
\label{eq_conv_sys}
\end{equation}
where $J$ is an arbitrary \emph{index set}, $x\in X$ is a
\emph{decision variable} selected from a general Banach space $X$
with its topological dual $X^{\ast}$, and where the functions
$f_{j}:X\rightarrow \overline{\mathbb{R}}:=\mathbb{R}\cup \{\infty
\},$ $j\in J$, are proper lower semicontinuous (lsc) and convex.
As above, the functional parameter $p$ belongs to the Banach space
$l_{\infty }(J)$ and the zero function $ \overline{p}=0$ is
regarded as the nominal parameter.

Hereafter we denote by $\mathcal{F}$ the \emph{feasible solution
map} of ( \ref{eq_conv_sys}); i.e., $\mathcal{F}:l_{\infty
}(J)\rightrightarrows X$ is defined by
\begin{equation}
\mathcal{F}(p):=\big\{x\in X\big|\;x\text{ is a solution to
}\sigma (p)\big\} .\label{eq_fsm}
\end{equation}

The convex system $\sigma (p)$ with $p\in l_{\infty }(J)$ can be
\emph{ linearized} by using the \emph{Fenchel-Legendre conjugate}
$f_{j}^{\ast }:X^{\ast }\rightarrow \overline{\mathbb{R}}$ for
each function $f_{j}$ given by
\begin{equation*}
f_{j}^{\ast }\left( u^{\ast }\right) :=\sup \big\{\left\langle
u^{\ast },x\right\rangle -f_{j}\left( x\right) \big|\;x\in
X\big\}=\sup \big\{\left\langle u^{\ast },x\right\rangle
-f_{j}\left( x\right) \big|\;x\in \mathrm{dom}f_{j}\big\},
\end{equation*}
where $\mathrm{dom}f_{j}:=\left\{ x\in X\mid f_{j}\left(
x\right)<\infty \right\} $ is the effective domain of $f_{j}$.
Specifically, under the current assumptions on each $f_{j}$ its
conjugate $f_{j}^{\ast }$ is also a proper lsc convex function
such that
\begin{equation*}
f_{j}^{\ast \ast }=f_{j}\;\mbox{ on }\;X\;\mbox{ with
}\;f_{j}^{\ast \ast }:=\left( f_{j}^{\ast }\right) ^{\ast}.
\end{equation*}
In this way, for each $j\in J$, the inequality $f_{j}\left(
x\right)\le p_{j}$ turns out to be equivalent to the linear system
\begin{equation*}
\left\{ \left\langle u^{\ast },x\right\rangle -f_{j}^{\ast }\left( u^{\ast
}\right) \leq p_{j},\text{ }u^{\ast }\in \mathrm{dom}f_{j}^{\ast }\right\}
\end{equation*}
in the sense that they have the same solution sets.

In order to link to the notation of the previous sections, put
\begin{equation*}
T:=\left\{\left(j,u^{\ast }\right)\in J\times X^{\ast
}\;|\;u^{\ast}\in\mathrm{dom}f_{j}^{\ast}\right\}
\end{equation*}
and note that $T$ is partitioned as
\begin{equation}
T=\bigcup_{j\in J}T_{j},\qquad\text{where }T_{j}:=\{j\}\times
\mathrm{dom} f_{j}^{\ast }.\label{eq Tj convex}
\end{equation}
In this way the right-hand side perturbations on the nominal
convex system $\sigma (0)$ correspond to block perturbations of
the linearized nominal system $\sigma_{\mathcal{J}}(0)$ with the
partition $\mathcal{J}:=\left\{ T_{j}\mid j\in J\right\}$. It is
important to realize  to this end that $\mathcal{F}$ and
$\mathcal{F}_{\mathcal{J}}$ are \emph{exactly the same mapping}$.$

Recall that the \emph{epigraph} of a function $h\colon
X\rightarrow\overline{\mathbb{R}}$ is defined by
\begin{equation*}
\mbox{\rm epi}\,h:=\big\{(x,\gamma )\in X\times
\mathbb{R}\big|\;x\in \mbox{\rm dom}\,h,\ h(x)\leq \gamma \big \}.
\end{equation*}
It is easy to see that the convex counterpart of the set $C_{
\mathcal{J}}\left( p\right)$ in (\ref{C linear j}) is {\normalsize
\begin{align}
C\left( p\right) & :=\mbox{\rm co}\,\left\{ \left( u^{\ast
},f_{j}^{\ast }\left( u^{\ast }\right) +p_{j}\right) \mid j\in
J,~u^{\ast }\in \mathrm{dom}
f_{j}^{\ast }\right\}  \notag \\
& =\mbox{\rm co}\,\big(\bigcup\nolimits_{j\in J}\mbox{\rm
gph}\,(f_{j}-p_{j})^{\ast }\big)\subset X^{\ast }\times \mathbb{R}.
\label{Cconv}
\end{align}
For more details the reader is addressed to \cite{DGL07} and
particularly to the extended Farkas' Lemma, which may be found in
\cite[Theorem~4.1]{DGL07}.}

{\normalsize In this convex setting the SSC at $\sigma \left(
0\right)$ reads as $\sup_{t\in T}f_{t}(\widehat{x})<0$ for some
$\widehat{x}\in X.$ Note that $\widehat{x}$ is a strong Slater
point for $\sigma \left( 0\right) $ if and only if the same
happens for the linearized system $\sigma_{\mathcal{J}}\left(
0\right)$, i.e., $\sup_{\left( j,u^{\ast }\right) \in
T}\{\left\langle u^{\ast },\widehat{x}\right\rangle -f_{j}^{\ast
}\left( u^{\ast }\right)\}<0$.}

{\normalsize The next result, which follows from its linear
counterpart in Proposition~\ref{pr coderivJ}, computes the
coderivative of the solution map \eqref{eq_fsm} to the original
infinite convex system \eqref{eq_conv_sys} in terms of its initial
data.}

\begin{proposition}{\bf (computing coderivatives for convex systems).}
{\normalsize\label{Prop_charact_coderiv} Consider
$\overline{x}\in\mathcal{F} \left( 0\right) $ for the solution map
\eqref{eq_fsm} to the convex system \eqref{eq_conv_sys}. Then we
have $p^{\ast }\in D^{\ast }\mathcal{F}\left(0,
\overline{x}\right)\left( x^{\ast }\right)$ if and only if
\begin{equation}
\big(p^{\ast },-x^{\ast },-\left\langle x^{\ast
},\overline{x}\right\rangle \big)\in \mathrm{cl}^{\ast
}\mathrm{cone}\big(\bigcup\nolimits_{j\in J}\big[ \left\{-\delta
_{j}\right\} \times \mathrm{gph}\,f_{j}^{\ast }\big]\big).
\label{eq_charact_coderiv}
\end{equation}}
\end{proposition}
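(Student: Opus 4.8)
The plan is to derive this proposition directly from its linear counterpart, Proposition~\ref{pr coderivJ}, by exploiting the identification of the convex system \eqref{eq_conv_sys} with its Fenchel--Legendre linearization set up just above. First I would write $t=(j,u^{\ast})$ and recall that, with $T$ and the blocks $T_{j}$ as in \eqref{eq Tj convex}, the linearized data are $a_{t}^{\ast}=u^{\ast}$ and $b_{t}=f_{j}^{\ast}(u^{\ast})$ for $t=(j,u^{\ast})\in T_{j}$. Since each $f_{j}$ is proper lsc convex, its conjugate $f_{j}^{\ast}$ is proper, so $\mathrm{dom}\,f_{j}^{\ast}\ne\varnothing$ and hence $T_{j}\ne\varnothing$; thus $\mathcal{J}=\{T_{j}\mid j\in J\}$ is a genuine partition of $T$ and the machinery of Sections~2--3 applies. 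As already observed in the text, $\mathcal{F}$ in \eqref{eq_fsm} and $\mathcal{F}_{\mathcal{J}}$ in \eqref{feasible j} are literally the same set-valued mapping between $l_{\infty}(J)$ and $X$, so in particular $\overline{x}\in\mathcal{F}(0)=\mathcal{F}_{\mathcal{J}}(0)$ and their coderivatives at $(0,\overline{x})$ coincide.

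Next I would simply invoke Proposition~\ref{pr coderivJ}: $p^{\ast}\in D^{\ast}\mathcal{F}(0,\overline{x})(x^{\ast})=D^{\ast}\mathcal{F}_{\mathcal{J}}(0,\overline{x})(x^{\ast})$ holds if and only if
\begin{equation*}
\big(p^{\ast},-x^{\ast},-\langle x^{\ast},\overline{x}\rangle\big)\in\mathrm{cl}^{\ast}\mathrm{cone}\big\{(-\delta_{j},a_{t}^{\ast},b_{t})\mid j\in J,\ t\in T_{j}\big\}.
\end{equation*}
It then only remains to rewrite the generating set. Substituting the linearized data gives $\{(-\delta_{j},a_{t}^{\ast},b_{t})\mid j\in J,\ t\in T_{j}\}=\{(-\delta_{j},u^{\ast},f_{j}^{\ast}(u^{\ast}))\mid j\in J,\ u^{\ast}\in\mathrm{dom}\,f_{j}^{\ast}\}$, and since $\{(u^{\ast},f_{j}^{\ast}(u^{\ast}))\mid u^{\ast}\in\mathrm{dom}\,f_{j}^{\ast}\}=\mathrm{gph}\,f_{j}^{\ast}$ by definition of the graph, the generating set is exactly $\bigcup_{j\in J}\big[\{-\delta_{j}\}\times\mathrm{gph}\,f_{j}^{\ast}\big]$. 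Plugging this into the displayed inclusion yields \eqref{eq_charact_coderiv}, which finishes the argument.

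There is essentially no substantive obstacle here beyond bookkeeping: the real content is already carried by Proposition~\ref{pr coderivJ} (which rests on the extended Farkas Lemma of \cite{CLMP09}) together with the extended Farkas Lemma for convex systems \cite[Theorem~4.1]{DGL07}, which is what legitimizes treating $\sigma(p)$ and its linearization as the same mapping for every $p\in l_{\infty}(J)$. If one wanted the write-up to be self-contained rather than quoting that identification verbatim, the single point deserving a line of care is the equivalence of solution sets --- namely that $f_{j}(x)\le p_{j}$ holds iff $\langle u^{\ast},x\rangle-f_{j}^{\ast}(u^{\ast})\le p_{j}$ for all $u^{\ast}\in\mathrm{dom}\,f_{j}^{\ast}$ --- which is immediate from $f_{j}=f_{j}^{\ast\ast}$ and the Fenchel--Young inequality. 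Everything else is a direct transcription, so I expect the proof to be only a few lines long.
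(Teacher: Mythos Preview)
Your proposal is correct and follows exactly the approach the paper takes: the paper does not give a separate proof for this proposition but simply notes that it ``follows from its linear counterpart in Proposition~\ref{pr coderivJ},'' which is precisely what you do by identifying $\mathcal{F}$ with $\mathcal{F}_{\mathcal{J}}$ via the Fenchel--Legendre linearization and then rewriting the generating set as $\bigcup_{j\in J}[\{-\delta_{j}\}\times\mathrm{gph}\,f_{j}^{\ast}]$.
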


{\normalsize The next major result of the paper provides a precise
computation of the exact Lipschitzian bound of the solution map
\eqref{eq_fsm} in the case when either the set $\bigcup_{j\in J}
\mathrm{dom}\,f_{j}^{\ast }$ is bounded in $X^{\ast }$ (this is
the convex counterpart of the boundedness of $\left\{ a_{t}^{\ast
}\;|\;t\in T\right\}$) or the decision Banach space }$X$
{\normalsize is reflexive. Before this we show that the
boundedness assumption, which looks quite natural in the linear
setting, may fail in very simple convex examples.}

\begin{example} {\bf (failure of the bounded ness assumption for
convex systems).} {\rm Consider the following single inequality
involving one-dimensional decision and parameter variables:
\begin{equation}
x^{2}\leq p\;\mbox{ for }\;x,p\in \mathbb{R}.\label{ex}
\end{equation}
Note that the linearized system associated with \eqref{ex} reads
as follows:
\begin{equation*}
\left\{ ux\leq \frac{u^{2}}{4}+p,\quad u\in \mathbb{R}\right\} ,
\end{equation*}
and thus the coefficient boundedness assumption fails.}
\end{example}

\begin{theorem}{\bf (evaluation of the coderivative norm for
convex systems).} {\normalsize \label{Th_lip} For any
$\overline{x}\in\mathcal{F}\left(0\right)$ } {\normalsize we have
the relationships
\begin{align*}
& \sup\Big\{\left\Vert u^{\ast }\right\Vert
^{-1}\Big|\;\big(u^{\ast },\left\langle u^{\ast
},\overline{x}\right\rangle\big)\in \mbox{\rm cl} \,^{\ast
}\mbox{\rm co}\,\big(\bigcup\nolimits_{j\in J}\mbox{\rm
gph}\,f_{j}{}^{\ast }\big)\Big\} \\
&\le \left\Vert D^{\ast }\mathcal{F}\left( 0,\overline{x}\right)
\right\Vert \le\mathrm{lip}\,\mathcal{F}\left(
0,\overline{x}\right) .
\end{align*}}
{\normalsize If furthermore either the set $\bigcup_{j\in
J}\mathrm{dom} \,f_{j}^{\ast }$\ is bounded in $X^{\ast }$ or the
space }$X$ {\normalsize is reflexive, then the above inequalities
hold as equalities.}
\end{theorem}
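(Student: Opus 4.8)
The plan is to reduce Theorem~\ref{Th_lip} to its already-proven linear counterpart, Theorem~\ref{Th_norm lip j}, by exploiting the observation made in Section~4 that the feasible solution maps $\mathcal{F}$ and $\mathcal{F}_{\mathcal{J}}$ coincide once we set $T_j:=\{j\}\times\mathrm{dom}\,f_j^\ast$ and $\mathcal{J}:=\{T_j\mid j\in J\}$. Concretely, the nominal linear datum attached to index $t=(j,u^\ast)\in T$ is $(a_t^\ast,b_t):=(u^\ast,f_j^\ast(u^\ast))$, so that the characteristic set $C(0)$ of the linearized system is exactly $\mbox{\rm co}\,\big(\bigcup_{j\in J}\mbox{\rm gph}\,f_j^\ast\big)$, and the coefficient set $\{a_t^\ast\mid t\in T\}$ is precisely $\bigcup_{j\in J}\mathrm{dom}\,f_j^\ast$. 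First I would record these identifications explicitly, noting that $\mathcal{F}(0)=\mathcal{F}_{\mathcal{J}}(0)$ is the nominal convex feasible set so that $\overline{x}\in\mathcal{F}(0)$ makes sense as a point in $\mathcal{F}_{\mathcal{J}}(0)$, and that $D^\ast\mathcal{F}(0,\overline{x})=D^\ast\mathcal{F}_{\mathcal{J}}(0,\overline{x})$ since the two mappings are literally the same (this also follows by comparing Proposition~\ref{Prop_charact_coderiv} with Proposition~\ref{pr coderivJ}).

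With these identifications in hand, the chain of inequalities is immediate: applying Theorem~\ref{Th_norm lip j} to the linearized system yields
\[
\sup\Big\{\|u^\ast\|^{-1}\Big|\;\big(u^\ast,\langle u^\ast,\overline{x}\rangle\big)\in\mbox{\rm cl}\,^\ast C(0)\Big\}\le\|D^\ast\mathcal{F}_{\mathcal{J}}(0,\overline{x})\|\le\mathrm{lip}\,\mathcal{F}_{\mathcal{J}}(0,\overline{x}),
\]
and substituting $C(0)=\mbox{\rm co}\,\big(\bigcup_{j\in J}\mbox{\rm gph}\,f_j^\ast\big)$ together with $\mathcal{F}_{\mathcal{J}}=\mathcal{F}$ gives exactly the asserted first two inequalities. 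For the equality statement, observe that Theorem~\ref{Th_norm lip j} promotes all its inequalities to equalities precisely when $\{a_t^\ast\mid t\in T\}$ is bounded in $X^\ast$ or $X$ is reflexive; under the translation above the first condition reads ``$\bigcup_{j\in J}\mathrm{dom}\,f_j^\ast$ is bounded in $X^\ast$,'' which is exactly the hypothesis of Theorem~\ref{Th_lip}, and the reflexivity hypothesis is unchanged. Hence all the displayed inequalities become equalities, completing the proof.

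I do not anticipate a genuine obstacle here — the work was done in Section~3 and in the setup of Section~4; the only point requiring a word of care is the verification that the \emph{partition} $\mathcal{J}=\{T_j\}$ is legitimate in the sense of Section~1, i.e.\ that each $T_j=\{j\}\times\mathrm{dom}\,f_j^\ast$ is nonempty. This holds because each $f_j$ is proper lsc convex, so $f_j^\ast$ is proper and $\mathrm{dom}\,f_j^\ast\ne\varnothing$; I would state this explicitly so that the application of the block-perturbation machinery is unambiguous. The disjointness of the $T_j$ is automatic from the first-coordinate labeling, and $T=\bigcup_{j\in J}T_j$ by construction, so the partition axioms are satisfied and Theorem~\ref{Th_norm lip j} applies verbatim.
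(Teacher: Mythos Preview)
Your proposal is correct and follows essentially the same approach as the paper, which simply states that the result follows from Theorem~\ref{Th_norm lip j} applied to the linearized system via the block-perturbation framework set up earlier in Section~4. Your explicit verification that the partition $\mathcal{J}=\{T_j\}$ is legitimate (each $T_j\ne\varnothing$ since $f_j^\ast$ is proper) and the careful identification of $C(0)$ and the coefficient set are more detailed than the paper's one-line proof, but the strategy is identical.
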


\begin{proof} It follows from Theorem~\ref{Th_norm lip j} applied to the linearized system
with block perturbations by the linearization procedure and
discussions above.
\end{proof}

\begin{remark}
{\normalsize \label{ioffe} }\emph{After the publication of
\cite{CLMP09}, Alex Ioffe drew our attention to the possible
connections of some of the results therein with those obtained in
\cite{is} for general set-valued mappings of convex graph.
Examining this approach, we were able to check, in particular,
that the result of \cite[Corollary~4.7]{CLMP09} on the computing
the exact Lipschitzian bound of linear infinite systems via the
coderivative norm under the coefficient boundedness can be
obtained by applying Theorem~3 and Proposition~5 from \cite{is} by
involving some technicalities.}
\end{remark}

\begin{remark}
{\normalsize \label{ioffe 2} \emph{The main results of this paper
were basically obtained at the end of 2008 during the visit of the
third author to the University of Alicante and the Miguel
Hern\'{a}ndez University of Elche and then were presented at
several meetings in 2009-10 and also written in \cite{CLMP10}.
During the final revision of the manuscript we have become
familiar with the very recent preprint \cite{Ioffe2010} where,
under a certain uniform boundedness condition held by replacing
our functions}$ f_{j}$} \emph{with }$\max \{-1,f_{j}\},$ \emph{
the equality in Theorem~\ref{Th_lip} is obtained with no
coefficient boundedness or reflexivity assumptions by a completely
different approach.}
\end{remark}

\begin{remark}
{\rm Following our approach in \cite{CLMP09a}, the coderivative
calculations presented above allow us to develop necessary
optimality conditions of both lower subdifferential and upper
subdifferential types for nonsmooth problems of semi-infinite and
infinite programming with feasible sets given by infinite systems
of convex inequalities; see \cite[Section~6]{CLMP10} for more
details.}
\end{remark}

\end{document}